\newif\iftikz  \tikztrue 
\newtheorem{theorem}{Theorem}
\newtheorem{corollary}{Corollary}
\newtheorem{proposition}{Proposition}
\newtheorem{lemma}{Lemma}
\newtheorem{remark}{Remark}
\theoremstyle{definition}
\newtheorem{definition}{Definition}
\newtheorem{ex}{Example}
\def\N{\mathbb {N}}
\def\Z{\mathbb {Z}}
\def\R{\mathbb {R}}
\def\C{\mathbb {C}}
\def\1{{{\mathit 1} \!\!\>\!\! I} }
\def\ie{{i.e.,\ }}
\def\eg{{e.g.~}}
\def\mm{{\mu^\flat}}
\begin{document}

\title{Natural extensions for piecewise affine maps via Hofbauer towers}
\author{Henk Bruin and Charlene Kalle}
\thanks{The second author was supported by the research grant FWF S6913 and the NWO Veni grant 693.031.140. The research was partly supported by the EU FP6 Marie Curie Research Training Network CODY (MRTN 2006 035651).}

\date{Version of \today}

\subjclass[2000]{Primary 37E05 Secondary 58F11, 28D05, 37A05}
\keywords{Natural extension, Piecewise affine maps, Hofbauer tower}

\begin{abstract}
We use canonical Markov extensions (Hofbauer towers)
to give an explicit construction of the natural extensions of various measure preserving endomorphisms, and present some applications to particular examples.
\end{abstract}

\maketitle

\section{Introduction}\label{Intro}
A measure theoretical dynamical system is a quadruple $(X, {\mathcal B}, \mu, T)$, where $(X, {\mathcal B}, \mu)$ is a probability space and $T: X \to X$ is a transformation that preserves the measure $\mu$, \ie $\mu(T^{-1}A)=\mu(A)$ for each set $A \in \mathcal B$. To study the properties of a non-invertible transformation $T$, one can use a natural extension. This is a bigger, invertible system $(Y, {\mathcal C}, \nu, S)$ that preserves both the original dynamics and the measure structure with $\mathcal C$ being the coarsest $\sigma$-algebra that makes this possible. Many properties of a natural extension carry over to the original system. For example, the measure theoretical entropies of both systems are equal and they have the same mixing properties. In \cite{Roh64}, Rohlin gave a canonical construction of a natural extension for a wide class of dynamical systems on Lebesgue spaces. He showed that any two natural extensions of the same system are isomorphic, hence we can speak of {\bf the} natural extension. Different versions however can have their own advantages. 

As a basic example, consider the angle doubling map $T_2 x = 2x \pmod 1$ 
on the unit interval $[0,1)$, preserving Lebesgue measure. 
A geometric version of the natural extension is given by the Baker transformation (Figure~\ref{f:baker}):
\[ B:[0,1)^2 \to [0,1)^2, \qquad (x,y) \mapsto  \big(2x \,(\text{mod }1), (y + \lfloor 2x \rfloor )/2\big).\]
Here the dimension of the space $Y$ is one larger than the dimension of $X$, giving room to separate preimage branches. We can recover the original system simply by projecting onto the first coordinate.

\begin{figure}[ht]
\centering
\includegraphics{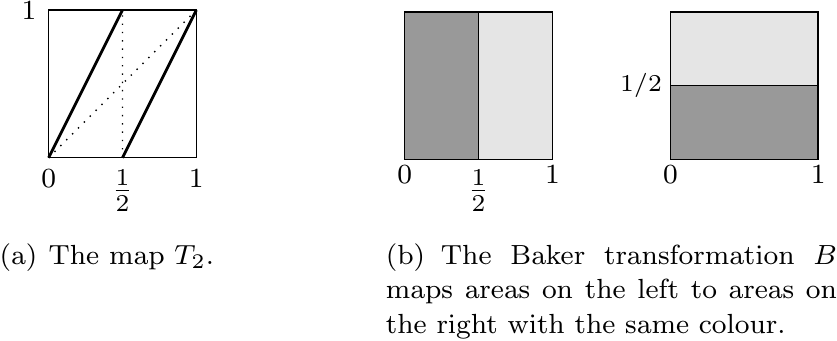}
\caption{The angle doubling map and a geometric version of its natural extension.}
\label{f:baker}
\end{figure}

Such a geometric version of the natural extension can be used for various purposes. For example, for 
$\beta$-transformations (\ie $x \mapsto \beta x \pmod 1$)
it yields an explicit expression of the density of the absolutely continuous invariant measure, see \cite{DKS,DK09}. 
In case $\beta$ is a Pisot number, certain geometric representations of algebraic natural extensions serve to 
identify periodic points (see for example \cite{Aki02,IR06}) and are associated to multiple tilings of a Euclidean space (see for example \cite{KS11,Sch00}). For the standard continued fraction transformation 
$x \mapsto \frac1x - \lfloor \frac1x \rfloor$, also called the Gauss map, a geometric natural extension substantially simplified proofs of results on the quality of the continued fraction approximation coefficients, such as the Doeblin-Lenstra Conjecture and generalisations of Borel's Theorem (see \cite{Jag86,JK89}). For the $\alpha$-continued fraction map $T_\alpha:[\alpha-1, \alpha) \to [\alpha-1, \alpha)$,
$x \mapsto |\frac1x| - \lfloor |\frac1x| + 1-\alpha \rfloor$ for parameter $\alpha \in [0,1]$ a geometric version was recently used to study the behaviour of the entropy as a function of $\alpha$ in \cite{KSS10}.

In this paper we present a general method for obtaining geometric natural extensions of piecewise continuous maps with locally constant Jacobian $J(x) = \frac{d\mu \circ T}{d\mu}(x)$, which generalises the piecewise linearity of some of the above examples. The construction is based on the ``canonical Markov extension'' approach introduced by Hofbauer in \cite{Hof80}, commonly called {\em Hofbauer tower}; see also investigations by Buzzi \cite{Buz99} and Bruin \cite{Bru95}. 
In short, we apply the above ``extend the space with one dimension'' approach that works for the doubling map to the Hofbauer tower. This can be found in Section 2. In Section 3 we show that this natural extension is isomorphic to a countable state Markov shift and that it has several induced transformations that are Bernoulli.
In the last two sections we give examples of systems to which the construction applies. These include all piecewise linear expanding interval maps with positive entropy. Other examples are certain higher dimensional piecewise affine maps, in particular a specific skew-product transformation called the random $\beta$-transformation, and rational maps on their Julia set.

\section{The Construction}\label{sec:constr}

In this section we give the construction of the Hofbauer tower and of the geometric natural extension for the class of maps we consider in this article. We first describe this class of transformations.

\subsection{The class of transformations}\label{ss:1to5}
Let $X$ be a compact subset of $\mathbb R^n$, $\mathcal B$ the Lebesgue $\sigma$-algebra on $X$ and $\mu$ a probability measure on $(X,\mathcal B)$. Let $\mathcal Z = \{ Z_j \}_{1 \le j \le N}$ be a collection of closed sets giving a partition of $X$, so $\mu(Z_j)>0$ for all $j$, $\mu(Z_i \cap Z_j) = 0$ for $i \neq j$ and $\mu\big(\cup_{1 \le j \le N} Z_j \big)=1$. Let $T:X \to X$ satisfy the following conditions.
\begin{itemize}
\item[(c1)] For each $Z\in \mathcal Z$, the map $T$ can be extended uniquely to a continuous injective map $T_Z:Z\to \overline{T( \text{int }Z)}$, where $\text{int }Z$ denotes the interior of $Z$ and the bar denotes the closure.
\item[(c2)] For each set $A \in \mathcal B$, also $TA, T^{-1}A \in \mathcal B$ and if $\mu(A)=0$, then also $\mu(T^{-1}A)=0$.
\item[(c3)] The partition $\mathcal Z$ generates $\mathcal B$ in forward time. In other words, $\bigvee_{k \ge 0}T^{-k}\mathcal Z=\mathcal B$, where $\bigvee_{k \ge 0}T^{-k}\mathcal Z$ denotes the smallest $\sigma$-algebra containing all {\em cylinder sets}, \ie the elements of common refinements ${\mathcal Z}_n := \bigvee_{k=0}^n T^{-k}\mathcal Z$.
\end{itemize}
Thus we assume in (c2) that $\mu$ is non-singular w.r.t.\ $T$, but not yet that $\mu$ is $T$-invariant. This assumption will either be made later, or, starting from a reference measure $\mm$, our construction will produce a $T$-invariant measure $\mu \ll \mm$ for which a geometric natural extensions will be constructed. The important step is that we acquire a Markov measure for the Hofbauer tower, which we explain in the following section.

\subsection{The Hofbauer tower}\label{ss:Hofbauer}
Recall that $\mathcal Z_n = \bigvee_{k=0}^n T^{-k}\mathcal Z$ denotes the collection of $(n+1)$-cylinder sets 
$Z_{j_0 \cdots j_n}$, defined by
\[ 
Z_{j_0 \cdots j_n} = Z_{j_0} \cap T^{-1} Z_{j_1} \cap \cdots \cap T^{-n}Z_{j_n},
\]
whenever $\mu(Z_{j_0 \cdots j_n})>0$. Hence $\mathcal Z_0=\mathcal Z$. To obtain the Hofbauer tower we consider the $n$-th images under $T$ of the $(n+1)$-cylinder sets and order them in a convenient way. Indeed, consider the closures of the sets 
\[ T^n Z_{j_0 \cdots j_n}  \quad \text{ for } \quad Z_{j_0 \cdots j_n} \in \mathcal Z_n, \, n \ge 0,\]
with the equivalence relation $\sim$ given by $T^n Z_{j_0 \cdots j_n} \sim T^m Z_{i_0 \cdots i_m}$ if the measure of the symmetric difference
$\mu(T^n Z_{j_0 \cdots j_n} \triangle T^m Z_{i_0 \cdots i_m}) = 0$. Let $\mathcal D$ denote the set of equivalence classes under this relation.
We will occasionally abuse notation and consider the elements of $\mathcal D$ as subsets of $X$ instead of equivalence classes. Note that $T^n Z_{j_0 \cdots j_n} \subseteq Z_{j_n}$.

Clearly $\mathcal D$ is finite or countably infinite, so we can take an ordered index set $\alpha \subseteq \N = \{ 1,2,3,\dots\}$ and write ${\mathcal D} = \{ D_u : u \in \alpha\}$. It is convenient to set $D_u = Z_u$ for $u = 1, \dots, N$, so that the first $N$ elements of $\mathcal D$ are simply the elements of $\mathcal Z$, and we call $\hat X_0 = \sqcup_{u=1}^N D_u$ the {\em base of the Hofbauer tower}. The full {\em Hofbauer tower} $\hat X$ (see \cite{Hof80}) is the disjoint union of the elements of $\mathcal D$,
\[ 
\hat X = \bigsqcup_{n \geq 0}  \bigsqcup_{j_0\cdots j_n} T^n Z_{j_0 \cdots j_n}  / \! \sim \ \ = \bigsqcup_{u \in \alpha} D_u.
\]
\begin{remark}{\rm
There is a choice to define the levels $D_u$ as images $T(D_v \cap Z_j)$ as done by Hofbauer \cite{Hof80} and Keller \cite{Kel89} or as partition elements $T D_v \cap Z_j$ restricted to levels, as is done by Buzzi, \eg \cite{Buz95}. This difference has no profound effect on the outcome; however we follow Buzzi here, as it makes it easier to interpret the dynamics on $\hat X$ as a one-sided subshift 
of $(\alpha^{\N}, \sigma)$.}
\end{remark}

When it is important to specify which component a point $\hat x$ in the Hofbauer tower belongs to, we write $\hat x = (x, D)$ or $(x,u)$ when $D = D_u$. The canonical projection $\pi:\hat X \to X$, $\hat x = (x,D) \mapsto x$, maps the Hofbauer tower onto $X$. Note that $\hat {\mathcal B} := {\mathcal D} \vee \pi^{-1}(\mathcal B)$ is the Lebesgue $\sigma$-algebra on $\hat X$.

\vskip .2cm

We extend the dynamics of $T$ to $\hat X$. Let $D = T^n Z_{j_0 \cdots j_n}  \in \mathcal D$. Then, for each $1 \le j \le N$ such that $\mu(Z_j \cap TD) > 0$, also $D' := Z_j \cap TD=  T^{n+1} Z_{j_0 \cdots j_n j} \in \mathcal D$. Define $\hat T : \hat X \to \hat X$ by
\[ \hat x = (x,D) \mapsto (T x, Z_j \cap TD) \, \text{ if } \, Tx \in Z_j, \] 
and write an arrow $D \to D'$ if this happens. By construction, $\mathcal D$ is a Markov partition of $(\hat X, \hat T)$, and $\pi \circ \hat T = T \circ \pi$. The arrow relation on $(\hat X, \hat T)$ gives rise to a {\em canonical Markov graph} $(\mathcal D, \to)$. Define the symbol space 
\begin{equation}\label{eq:MarkovShift}
\Sigma := \{ y = (y_0y_1y_2\dots) : y_i \in \alpha \text{ and } D_{y_i} \to D_{y_{i+1}} \text{ for all } i \ge 0\}
\end{equation}
indicating all the one-sided paths on $(\mathcal D, \to)$ and let $\sigma: \Sigma \to \Sigma$ denote the left shift, \ie $(\sigma y)_i = y_{i+1}$.  
Let $\eta: \hat X \to \Sigma, \, \hat x \mapsto y$ be given by $y_i = u$ if $\hat T^i \hat x \in D_u$. The system $(\Sigma, \sigma)$ is a factor of $(\hat X, \hat T)$ with {\em factor map} $\eta$, \ie $\eta$ is surjective and $\eta \circ \hat T = \sigma \circ \eta$. A probability measure $\hat \mu$ on $(\mathcal D, \to)$ is called a {\em Markov measure} with transition probabilities $p_{u,v}$ if for all $u,v \in \alpha$
\begin{itemize}
\item $p_{u,v} \in [0,1]$, $p_{u,v}=0$ when $D_u \not \to D_v$ and $\sum_{v:D_u \to D_v} p_{u,v} = 1$, \\
\item $\sum_{u:D_u \to D_v} p_{u,v} \hat \mu(D_u) = \hat \mu(D_v)$.
\end{itemize}
We can extend the Markov measure $\hat \mu$ to ${\mathcal D} \vee \hat T^{-1}{\mathcal D}$ by defining $\hat \mu(D_u \cap \hat T^{-1}D_v) = p_{u,v} \hat \mu(D_u)$. Repeating this to cylinder sets of any length, and extending to the $\sigma$-algebra $\bigvee_{k \ge 0} \hat T^{-k}\mathcal D$, we automatically get that $\hat \mu$ is $\hat T$-invariant. We can take such Markov measures as starting point and define $\mu$ on $X$ as $\mu = \hat \mu \circ \pi^{-1}$.

\begin{lemma} For any Markov measure $\hat \mu$, the projected measure
$\mu = \hat \mu \circ \pi^{-1}$ is $T$-invariant and satisfies conditions (c1)-(c3) of Section~\ref{ss:1to5}.
\end{lemma}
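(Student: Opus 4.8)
The plan is to extract everything from two facts already in hand: the semiconjugacy $\pi\circ\hat T=T\circ\pi$, and the fact (noted just before the statement) that a Markov measure $\hat\mu$ is a $\hat T$-invariant probability measure on $(\hat X,\hat{\mathcal B})$. The conceptual content of the lemma is a one-line computation with the commuting diagram; the remainder is measure-theoretic bookkeeping.

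\emph{$T$-invariance.} Fix $A\in\mathcal B$. Since $\pi$ is measurable, $\pi^{-1}A\in\hat{\mathcal B}$, and from $T\circ\pi=\pi\circ\hat T$ one gets $\pi^{-1}(T^{-1}A)=(T\circ\pi)^{-1}A=(\pi\circ\hat T)^{-1}A=\hat T^{-1}(\pi^{-1}A)$. Hence
\[ \mu(T^{-1}A)=\hat\mu\big(\pi^{-1}(T^{-1}A)\big)=\hat\mu\big(\hat T^{-1}(\pi^{-1}A)\big)=\hat\mu(\pi^{-1}A)=\mu(A), \]
the third equality being $\hat T$-invariance of $\hat\mu$. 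One small verification is needed: $\hat\mu$ was defined on $\bigvee_{k\ge0}\hat T^{-k}\mathcal D$, so one must check $\pi^{-1}\mathcal B$ lies, up to null sets, inside that $\sigma$-algebra. This follows because (c3) gives $\pi^{-1}\mathcal B=\bigvee_{k\ge0}\hat T^{-k}(\pi^{-1}\mathcal Z)$, and, using $T^nZ_{j_0\cdots j_n}\subseteq Z_{j_n}$, each $\pi^{-1}Z_j$ coincides up to an $\mm$-null set with the union of the levels $D_u\subseteq Z_j$, hence lies in the $\sigma$-algebra generated by $\mathcal D$.

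\emph{Conditions (c1)--(c3).} Condition (c1) is a purely topological statement about $T$ and the sets $Z\in\mathcal Z$; condition (c3), $\bigvee_{k\ge0}T^{-k}\mathcal Z=\mathcal B$, together with the measurability clause ``$TA,T^{-1}A\in\mathcal B$'' of (c2), involve only $T$, $\mathcal Z$ and $\mathcal B$. None of them mentions $\mu$, so they are inherited verbatim from the standing hypotheses on $T$. The non-singularity clause of (c2) is immediate from the invariance just proved: $\mu(A)=0$ forces $\mu(T^{-1}A)=\mu(A)=0$. It remains to check that $\mathcal Z$ is a genuine partition for $\mu$: $\mu(X)=\hat\mu(\hat X)=1$ because $\pi$ is onto; $\mu\big(\bigcup_jZ_j\big)=1$ because $\bigcup_jZ_j=X$; $\mu(Z_i\cap Z_j)=0$ for $i\ne j$ because each level $D_u$ sits in a single partition element, so $\pi^{-1}(Z_i\cap Z_j)$ meets $D_u$ only in a subset of the boundary $Z_i\cap Z_j$; and $\mu(Z_j)\ge\hat\mu(D_j)$, since $D_j=Z_j$ is a base level contained in $\pi^{-1}Z_j$.

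\emph{Where the work lies.} The only points that are not purely formal are the positivity $\mu(Z_j)>0$ and the assertion that $\hat\mu$ does not charge the level-boundaries; both hold under a mild non-degeneracy of the Markov measure, namely that $\hat\mu$ charges every base level $D_j$ and, more to the point, sits in the Lebesgue class on each level — which is precisely the setting in which the construction of $\mathcal D$ modulo $\mm$-null sets is meant to be read. I would build this into the hypothesis (or, equivalently, restrict at the outset to the support of $\hat\mu$, which is again a canonical Markov subgraph), after which the displayed computation and the bookkeeping above finish the proof. Apart from this, no step is a genuine obstacle; the care required is entirely in tracking which $\sigma$-algebra each measure lives on and the ubiquitous ``mod $\mm$-null'' conventions.
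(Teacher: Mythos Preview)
Your proof is correct and follows essentially the same approach as the paper: both note that (c1), (c3) and the measurability clause of (c2) are measure-independent, and both obtain $T$-invariance (hence non-singularity) from the semiconjugacy $T\circ\pi=\pi\circ\hat T$ together with the $\hat T$-invariance of $\hat\mu$, after checking that $\pi^{-1}\mathcal B$ lies in the $\sigma$-algebra on which $\hat\mu$ is defined. The paper proves the full equality $\bigvee_{k\ge 0}\hat T^{-k}\mathcal D=\mathcal D\vee\pi^{-1}(\mathcal B)$ whereas you argue only the inclusion actually needed, and your extra remarks on the partition axioms $\mu(Z_j)>0$, $\mu(Z_i\cap Z_j)=0$ go beyond what the paper chooses to verify.
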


\begin{proof}
Conditions (c1), (c3) and the first part of (c2) do not mention a measure and are just part of the set-up. For the remaining part of condition (c2) we first show that $\bigvee_{k\ge 0} \hat T^{-k}\mathcal D = \mathcal D \vee \pi^{-1}(\mathcal B)$. Recall that $\mathcal B = \bigvee_{k \ge 0} T^{-k}\mathcal Z$. Hence,
\[ \mathcal D \vee \pi^{-1}(\mathcal B) = \bigvee_{k \ge 0} \mathcal D \vee \pi^{-1}(T^{-k}\mathcal Z).\]
To show that $\mathcal D \vee \pi^{-1}(\mathcal B) \subseteq \bigvee_{k\ge 0} \hat T^{-k}\mathcal D$, take any cylinder $Z_{j_0 \cdots j_n}$ and suppose that $\hat \mu\big(\pi^{-1}(Z_{j_0 \cdots j_n})\cap D\big) >0$ for some $D \in \mathcal D$. Then there is a set $D' \in \mathcal D$ such that $D' = \hat T^n\big(\pi^{-1}(Z_{j_0 \cdots j_n})\cap D\big)$ and hence $\pi^{-1}(Z_{j_0 \cdots j_n})\cap D \in \bigvee_{k=0}^n \hat T^{-k} \mathcal D$. The inclusion then follows since $\mathcal D \vee \pi^{-1}(\mathcal B)$ is the smallest $\sigma$-algebra containing all sets of the form $\pi^{-1}(Z_{j_0 \cdots j_n})\cap D$. For the other inclusion, take a non-empty set of the form $D_{u_0} \cap \hat T^{-1} D_{u_1} \cap \cdots \cap \hat T^{-k} D_{u_k}$. This means that there exists a cylinder set $Z=Z_{j_0 \cdots j_n j_{n+1}\cdots j_k}$ such that $T^n Z \subseteq D_{u_0}$ and  $T^{n+k} Z = D_{u_k}$. By the first part of (c2) $T^n Z \in \mathcal B$, so
\[ D_{u_0} \cap \hat T^{-1} D_{u_1} \cap \cdots \cap \hat T^{-k} D_{u_k} = D_{u_0} \cap \pi^{-1} (T^n Z) \in \mathcal D \vee \pi^{-1}(\mathcal B).\] 
Hence, the two $\sigma$-algebras are equal. The $T$-invariance of $\mu$ then follows since $\hat \mu$ is $\hat T$-invariant and $T \circ \pi = \pi \circ \hat T$.  
\end{proof}

\vskip .3cm
\begin{ex}{\rm One example, usually given for finite graphs, but valid for infinite graphs as well provided they are positive recurrent and hence the eigenvectors mentioned below belong to $\ell^2$ (see Gurevi\v{c} \cite{Gur69}), is the {\em Parry measure}, see \cite[Section 8.3]{Walters}. To construct this measure, we assume for simplicity that the graph $(\mathcal D, \to)$ is primitive, and we let $A = (a_{t,u})_{t,u \in \alpha}$ be its adjacency matrix given by $a_{t,u} = 1$ if $D_t \to D_u$ and $a_{t,u} = 0$ otherwise. Let $\lambda$ be the leading eigenvalue; by the Perron-Frobenius Theorem $\lambda > 0$ and its associated left eigenvector $\overline{v} = (\overline v_u)_{u \in \alpha}$ and right eigenvector $\overline{w} = (\overline w_u)_{u \in \alpha}$ can be taken strictly positive. We can scale $\overline v$ and $\overline w$ such that $\sum_{u \in \alpha} \overline v_u \overline w_u = 1$, and construct a stochastic matrix
\[ 
P = (p_{t,u} ), \qquad p_{t,u} = \frac{a_{t,u} \overline v_u}{\lambda \overline v_t}.\]
Finally, the Markov  measure $\hat \mu(D_u) = \overline v_u \overline w_u$ for all $u \in \alpha$, and in general,
\[
\hat \mu(\{ \hat x \ : \ \hat T^k(\hat x) \in D_{u_k}, \ 0 \le k < n \})
= 
\overline v_{u_0} \overline w_{u_0} p_{u_0, u_1} \cdots p_{u_{n-2}, u_{n-1}}, 
\]
is called the {\em Parry measure}. Extended to the $\sigma$-algebra generated by the cylinder sets $\bigvee_{k = 0}^n \hat T^{-k}\mathcal D$, it becomes the measure of maximal entropy of $(\hat X, \hat T)$,
see \cite[Chapter 4.4]{KH}.}
\end{ex}

\subsection{Lifting measures to the Hofbauer tower}\label{ss:hofbauertower}
The above shows that the Markov structure of the Hofbauer Tower always gives a measure on the tower. In general we often have a measure $\mu$ on $(X, \mathcal B)$ that behaves nicely with respect to the map $T$. We would like to determine if there exists a $\hat T$-invariant measure $\hat \mu$ on $\hat X$ such that $\hat \mu \circ \pi^{-1}$ has some relation to $\mu$. Below we follow two strategies of constructing such a measure $\hat \mu$, one in case $\mu$ is $T$-invariant and one in case $\mu$ is not.

Assume that we have a system $(X, \mathcal B, \mu, T)$ satisfying (c1),(c2) and (c3). First extend the measure $\mu$ to a measure $\bar \mu$ on $\hat X$ by setting
\begin{equation}\label{eq:mu_bar}
\bar \mu(A) = \sum_{D \in \mathcal D} (\mu \circ \pi)(A\cap D)
\end{equation} 
for all $A \in \hat{\mathcal B}$. Note that $\bar \mu$ is not (necessarily) $\hat T$-invariant, and can in principle be infinite albeit $\sigma$-finite. Since we have assumed that $\mu(Z)>0$ for each cylinder $Z$, we have $\bar \mu(D) >0$ for all $D \in \mathcal D$. Define a sequence of Cesaro means $(\hat \mu_n)_{n \geq 1}$ on $\hat X$ by setting
\begin{equation}\label{q:mun}
\hat \mu_n(A) = \frac1n \sum_{k=0}^{n-1} \bar \mu(\hat T^{-k}A \cap \hat X_0).
\end{equation}
Here the intersection with base $\hat X_0$ guarantees that $\hat \mu_n$ are all probability measures. The measure $\mu$ is called {\em liftable} if the sequence $\{ \hat \mu_n\}_{n \ge 1}$ from \eqref{q:mun} converges in the vague topology (\ie weak topology on compacta\footnote{Recall that the space $X$ and also the levels $D$ are compact, so $(\hat \mu_n|_D)_n$ has a weak accumulation point for each $D$}) to a non-zero measure $\hat \mu$. Conditions under which measures are liftable are extensively studied, see for example \cite{Kel89,BT07,Buz99,Pes08}. The main point is that there can be no accumulation of mass on the boundaries of sets in the Hofbauer tower and mass cannot escape to infinity. Fix $D_u \in \mathcal D$ and let
\begin{equation}\label{q:partialdn}
\partial_n D_u =  \big\{ Z \in \mathcal Z_n \, : \, 0 < \mu (Z \cap D_u ) < \mu (Z) \big \}.
\end{equation}
In words, $\partial_n D_u$ contains all $(n+1)$-cylinders $Z$ such that $Z$ and $D_u$ have a non-trivial intersection and the cylinder $Z$ is not completely contained in $D_u$. The {\em capacity} of the map $T$ is defined by
\begin{equation}\label{q:cap}
\text{cap}(T) = \limsup_{n \to \infty} \frac1n \log \sup_{u \in \alpha} \#(\partial_n D_u).
\end{equation}
For the proof of Proposition~\ref{p:kel2} and for later use, define the sets
\begin{equation}\label{q:bun}
B_{u,n} = \bigcup_{Z \in \partial_n D_u} (Z\cap D_u).
\end{equation}
We use the notation $\partial A$ for the usual {\em boundary} of a set $A \subset \mathbb R^n$. For the liftability of $\mu$ and the construction of the natural extension in the next section we need to make three additional assumptions on our system.
\begin{itemize}
\item[(c4)]  For each $1 \le j \le N$ there is a constant $s_j \geq 1$ such that for all measurable sets $A \subseteq Z_j$, $\mu(TA) = s_j \mu(A)$.
\item[(c5)] $\mu$ is {\em ergodic}, \ie if $T^{-1}A=A$ for some $A \in \mathcal B$, then $\mu(A) = 0$ or 1.
\item[(c6)] $\mu\big(\cup_n T^n(\cup_{j=1}^N \partial Z_j)\big)=0$.
\end{itemize}

\begin{remark}{\rm
(i) Condition (c4) requires that the Jacobian of $T$ (see \cite{Par69}) is locally constant; thus $J_{\mu,T}:=\frac{d\mu \circ T}{d\mu}$ has zero distortion.\\
(ii) In (c5) we assume ergodicity without insisting on $T$-invariance. 
Ergodicity of $\mu$ implies that each limit point of $\{ \hat \mu_n\}_{n \ge 1}$ is either zero, or a probability measure.
}
\end{remark}
The next proposition gives some first properties of limit points of the sequence in (\ref{q:mun}).
\begin{proposition}\label{p:erg}
Let $\hat \mu$ be a limit point of the sequence $\{ \hat \mu_n\}$ defined in (\ref{q:mun}). Then $\hat \mu$ is $\hat T$-invariant and ergodic. Also, $\hat \mu \circ \pi^{-1}$ is ergodic.
\end{proposition}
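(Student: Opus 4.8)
The plan is to prove $\hat T$-invariance of $\hat\mu$ by a Krylov--Bogolyubov argument adapted to this non-compact, only piecewise continuous setting; then to deduce ergodicity of $\hat\mu$ from the ergodicity of $\mu$ (condition (c5)) by analysing the return structure to the base $\hat X_0$; and finally to observe that ergodicity of $\hat\mu\circ\pi^{-1}$ is automatic, since $\pi\circ\hat T=T\circ\pi$ makes $\pi$ a factor map, so every $T$-invariant $A$ gives $\hat T^{-1}\pi^{-1}A=\pi^{-1}A$ and hence $\hat\mu\circ\pi^{-1}(A)=\hat\mu(\pi^{-1}A)\in\{0,1\}$ once $\hat\mu$ is $\hat T$-ergodic. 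By the observation in the Remark following (c6) we may assume $\hat\mu$ is a probability measure (if it is the zero measure there is nothing to prove); then, all the $\hat\mu_n$ and $\hat\mu$ being probability measures on the $\sigma$-compact space $\hat X$, the family is tight and $\hat\mu_n\to\hat\mu$ weakly, not merely vaguely.

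For the invariance put $\nu:=\bar\mu|_{\hat X_0}$, which is a probability measure because $\bar\mu(\hat X_0)=\sum_{u=1}^N\mu(Z_u)=1$, so that \eqref{q:mun} reads $\hat\mu_n=\frac1n\sum_{k=0}^{n-1}(\hat T^k)_*\nu$. For any bounded measurable $g$ the sum telescopes,
\[
\int g\,d\hat\mu_n-\int g\circ\hat T\,d\hat\mu_n=\frac1n\Big(\int g\,d\nu-\int g\circ\hat T^{\,n}\,d\nu\Big),
\]
so its absolute value is at most $2\|g\|_\infty/n\to0$. Take $g$ continuous with compact support; then $\int g\,d\hat\mu_n\to\int g\,d\hat\mu$. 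Because the levels $D_u$ are clopen in $\hat X$ and each $T_Z$ is continuous by (c1), the discontinuity set of $\hat T$ is contained in $\pi^{-1}\big(T^{-1}(\bigcup_{j}\partial Z_j)\big)$, and this set is $\bar\mu$-null: $\mu(\bigcup_j\partial Z_j)=0$ by (c6) (the $n=0$ term), hence $\mu\big(T^{-1}\bigcup_j\partial Z_j\big)=0$ by non-singularity (c2), and therefore $\bar\mu\big(\pi^{-1}(T^{-1}\bigcup_j\partial Z_j)\big)=\sum_{D\in\mathcal D}\mu\big(\pi(D)\cap T^{-1}\bigcup_j\partial Z_j\big)=0$. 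Granting that $\hat\mu$ also assigns this closed set zero measure, $g\circ\hat T$ is bounded and $\hat\mu$-a.e.\ continuous, so $\int g\circ\hat T\,d\hat\mu_n\to\int g\circ\hat T\,d\hat\mu$; letting $n\to\infty$ gives $\int g\circ\hat T\,d\hat\mu=\int g\,d\hat\mu$ for all such $g$, \ie $\hat\mu=\hat\mu\circ\hat T^{-1}$.

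For ergodicity of $\hat\mu$, let $\hat A$ be $\hat T$-invariant with $\hat\mu(\hat A)>0$; we must show $\hat\mu(\hat A)=1$. Since $\hat T^{-k}\hat A=\hat A$ for all $k$, one has $\hat\mu_n(\hat A)=\frac1n\sum_{k=0}^{n-1}\nu(\hat T^{-k}\hat A)=\nu(\hat A)=\bar\mu(\hat A\cap\hat X_0)$ for every $n$, and $\pi|_{\hat X_0}$ is a bijection onto $X$ (modulo null sets) carrying $\nu$ to $\mu$, so this common value equals $\mu(B_0)$ with $B_0:=\pi(\hat A\cap\hat X_0)$; granting again that $\hat A$ may be taken to be a $\hat\mu$-continuity set, letting $n\to\infty$ gives $\hat\mu(\hat A)=\mu(B_0)$ (and, applied to $\hat A^c$, $\hat\mu(\hat A^c)=\mu(X\setminus B_0)$). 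It remains to see $\mu(B_0)\in\{0,1\}$: the idea is to analyse the return structure of $\hat T$ to $\hat X_0$ — $\hat\mu$-a.e.\ orbit visits $\hat X_0$ infinitely often and the level coordinate is, after each return, governed by the $\mathcal Z$-itinerary of the $\pi$-image — so that the $\hat T$-invariant $\sigma$-algebra coincides mod $\hat\mu$ with $\pi^{-1}(\mathcal B)$; then $\hat T$-invariance of $\hat A$ forces $T^{-1}B_0=B_0$ mod $\mu$ (using that $\mu$ and $\hat\mu\circ\pi^{-1}$ are comparable on $\hat X_0$ by (c4)), whence $\mu(B_0)\in\{0,1\}$ by ergodicity (c5). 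Ergodicity of $\hat\mu\circ\pi^{-1}$ then follows as explained in the first paragraph.

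The single genuine obstacle, recurring in both parts, is the ``no escape of mass to boundaries'' phenomenon: $\hat\mu$ is a priori only a vague limit of measures that are absolutely continuous with respect to $\bar\mu$, and one must rule out that, in the limit, mass concentrates on the (closed, $\bar\mu$-null) discontinuity set of $\hat T$ or on the topological boundaries of the cylinders and invariant sets that occur, and one needs $\hat\mu$ to be comparable to $\bar\mu$ near $\hat X_0$. This is exactly the content of liftability, and it is what hypotheses (c4) (the Jacobian is locally constant, hence distortion-free) and (c6) are designed to supply: concretely one estimates $\hat\mu_n$ of a shrinking neighbourhood of the relevant boundary set uniformly in $n$, using the sets $\partial_nD_u$ and $B_{u,n}$ of \eqref{q:partialdn}--\eqref{q:bun} and the capacity bound \eqref{q:cap}, and then shrinks the neighbourhood.
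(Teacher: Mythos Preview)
Your route is considerably more elaborate than the paper's, and the step you yourself flag as a ``genuine obstacle'' is left unresolved, so the proof is incomplete as written. The paper avoids that obstacle entirely by a much simpler choice of projected set.

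For ergodicity you work with $B_0=\pi(\hat A\cap\hat X_0)$ and then need a ``return structure to the base'' argument to show $T^{-1}B_0=B_0$ mod $\mu$, which you do not carry out. The paper instead projects the \emph{whole} invariant set: set $U:=\pi(\hat U)$. From $\hat T^{-1}\hat U=\hat U$ and $\pi\circ\hat T=T\circ\pi$ one reads off $T^{-1}U=U$ immediately, so (c5) gives $\mu(U)\in\{0,1\}$ with no induction or recurrence analysis. The link back to $\hat\mu$ is then the one-line estimate
\[
\bar\mu(\hat T^{-k}\hat U\cap\hat X_0)
=(\mu\circ\pi)(\hat T^{-k}\hat U\cap\hat X_0)
\le \mu\big(\pi(\hat U)\big)=\mu(U),
\]
using that $\hat T^{-k}\hat U=\hat U$ and that $\pi|_{\hat X_0}$ carries $\bar\mu$ to $\mu$. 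If $\mu(U)=0$ this forces $\hat\mu_n(\hat U)=0$ for every $n$, hence $\hat\mu(\hat U)=0$; the case $\mu(X\setminus U)=0$ is handled symmetrically via $\hat U^c$. Note this uses only (c5): the appeal to (c4), (c6), liftability and the capacity machinery in your last paragraph is unnecessary here and belongs to later propositions.

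For invariance, your careful discussion of the discontinuity set of $\hat T$ and of $\hat\mu$-continuity sets is not wrong in spirit, but the paper is content with the standard Cesaro-mean telescoping you already wrote down and does not enter into these topological niceties; in particular the paper does not invoke (c6) at this stage. Your final observation---that ergodicity of $\hat\mu\circ\pi^{-1}$ follows from ergodicity of $\hat\mu$ because $\pi$ is a factor map---is correct and is exactly what the paper does.
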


\begin{proof}
The $\hat T$-invariance of $\hat \mu$ follows since it is a limit of Cesaro means. For ergodicity, let $\hat U \subseteq \hat X$ be a measurable set such that $\hat T^{-1}\hat U = \hat U$. Write $U = \pi (\hat U)$. Then $T^{-1}U = U$, so by (c5) either $\mu(U)=0$ or $\mu(X \backslash U)=0$. If $\mu(U)=0$, then
\[ \bar \mu(\hat T^{-k} (\hat U) \cap \hat X_0) = (\mu \circ \pi)\big( \hat T^{-k} (\hat U) \cap \hat X_0 \big) \le  (\mu \circ \pi) (\hat T^{-k}\hat U) \le \mu(T^{-k}U) =0\]
for each $k \ge 0$. Hence, $\hat \mu_n (\hat U)=0$ for all $n$ and so $\hat \mu(\hat U)=0$. Similarly, if $\mu(X \backslash U)=0$, then $\hat \mu (\hat X \backslash \hat U)=0$. Hence $\hat \mu$ is ergodic.

For the last part, let $U \subseteq X$ be a measurable and $T$-invariant set. Then $\pi^{-1} (T^{-1}U) = \hat T^{-1} \pi^{-1}(U)$ and by the previous, $(\hat \mu \circ \pi)(U)$ is either $0$ or $1$.  
\end{proof}

\begin{proposition}[Theorem 2 from \cite{Kel89}]\label{p:kel2}
Assume that $(X, \mathcal B, \mu, T)$ satisfies (c1)-(c6). For a $T$-invariant measure $\mu$ the sequence $\{ \hat \mu_n\}_{n \ge 0}$ converges and if this limit $\hat \mu \not \equiv 0$, then $\hat \mu$ is an ergodic probability measure and $\hat \mu \circ \pi^{-1}=\mu$.
\end{proposition}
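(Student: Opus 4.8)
The plan is to first pin down the projection exactly, then prove convergence by recasting the level dynamics as a countable Markov chain, and finally identify the limit using ergodicity.

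\emph{Step 1: the projection identity.} I claim $\hat\mu_n\circ\pi^{-1}=\mu$ for \emph{every} $n$, not merely in the limit. Since $\pi\circ\hat T=T\circ\pi$ we have $\hat T^{-k}\pi^{-1}(A)=\pi^{-1}(T^{-k}A)$; and because $\hat X_0=\bigsqcup_{u=1}^N D_u$ with $D_u=Z_u$ and $\pi|_{D_u}$ equal to the identity, unwinding \eqref{eq:mu_bar} gives
\[
\bar\mu\big(\hat T^{-k}\pi^{-1}(A)\cap\hat X_0\big)=\sum_{u=1}^N\mu\big(Z_u\cap T^{-k}A\big)=\mu(T^{-k}A)=\mu(A),
\]
the last equality by $T$-invariance. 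Averaging over $k<n$ yields $\hat\mu_n\circ\pi^{-1}=\mu$, so every $\hat\mu_n$ is a probability measure. Approximating $\mathbf{1}_{\pi^{-1}V}$ from below by compactly supported continuous functions (each level $D_u$ is compact and clopen in $\hat X$), one also sees that any vague limit point $\hat\mu$ satisfies $\hat\mu\circ\pi^{-1}(V)\le\liminf_n\mu(V)=\mu(V)$ for open $V\subseteq X$, hence $\hat\mu\circ\pi^{-1}\le\mu$ on all of $\mathcal B$.

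\emph{Step 2: convergence.} Every $D_u\in\mathcal D$ is contained in a single partition element, say $D_u\subseteq Z_{j(u)}$, so by (c4) the Jacobian of $\hat T$ with respect to $\bar\mu$ is constant on $D_u$ and equal to $s_{j(u)}$. Hence the transfer operator $\hat L$ of $(\hat X,\hat T,\bar\mu)$ sends functions constant on each level to functions constant on each level; since $\mathbf{1}_{\hat X_0}$ is of this type, so is $\hat L^k\mathbf{1}_{\hat X_0}$, say with value $c_{u,k}$ on $D_u$, and therefore $\hat\mu_n|_{D_u}=\big(\tfrac1n\sum_{k=0}^{n-1}c_{u,k}\big)(\mu\circ\pi)|_{D_u}$. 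Put $\mathbf c_k(u):=c_{u,k}\mu(D_u)=\bar\mu(\hat T^{-k}D_u\cap\hat X_0)$. Reading off the action of $\hat L$ along the graph $(\mathcal D,\to)$ — here (c6) is used to discard the $\mu$-null boundary pieces, so that $\hat T^{-1}D_u$ meets each $D_v$ with $D_v\to D_u$ in exactly one branch — gives the renewal identity $\mathbf c_{k+1}(u)=\sum_{v:\,D_v\to D_u}p_{v,u}\,\mathbf c_k(v)$ with $p_{v,u}=\mu(D_u)/\big(s_{j(v)}\mu(D_v)\big)$, a stochastic matrix on $\alpha$ (the $v$-th row sums to $1$ because $\sum_{u:\,D_v\to D_u}\mu(D_u)=\mu(TD_v)=s_{j(v)}\mu(D_v)$ by (c4)), while $\mathbf c_0$ is the probability vector $u\mapsto\mu(Z_u)$ supported on $\{1,\dots,N\}$. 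Since $\mathbf c_k=\mathbf c_0P^k$ and $\mathbf c_0$ has finite support, $\tfrac1n\sum_{k<n}\mathbf c_k(u)$ converges for each $u$ by the (Cesaro) ergodic theorem for countable-state Markov chains — the limit being positive on the positive-recurrent part reached by $\mathbf c_0$ and $0$ otherwise. Testing against $C_c(\hat X)$-functions, which are supported on finitely many levels, this gives vague convergence $\hat\mu_n\to\hat\mu$ with $\hat\mu|_{D_u}=c_u^\infty(\mu\circ\pi)|_{D_u}$ for $c_u^\infty=\lim_n\tfrac1n\sum_{k<n}c_{u,k}$.

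\emph{Step 3: identification, and the main obstacle.} Assume $\hat\mu\not\equiv0$. By Proposition~\ref{p:erg}, $\hat\mu$ and $\hat\mu\circ\pi^{-1}$ are $\hat T$- respectively $T$-invariant and ergodic, and by the Remark following (c6) the ergodicity of $\mu$ forces a non-zero limit point to be an honest probability measure, so $\hat\mu(\hat X)=1$. Together with Step~1 this gives $\hat\mu\circ\pi^{-1}\le\mu$ while $\hat\mu\circ\pi^{-1}(X)=\hat\mu(\hat X)=1=\mu(X)$; a sub-probability measure dominated by a probability measure of the same total mass must equal it, so $\hat\mu\circ\pi^{-1}=\mu$. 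The genuinely delicate step — the heart of Keller's Theorem~2 being reproduced here — is exactly the implication ``non-zero limit $\Rightarrow$ full mass'': a priori part of the mass of $\hat\mu_n$ could escape to infinity in the non-compact tower while part survives at bounded levels, and in the Markov-chain language this is precisely the dichotomy positive-recurrent versus transient/null-recurrent. Ruling out the mixed behaviour is where ergodicity (c5) must be combined with the zero-distortion hypothesis (c4) and the negligibility of the orbits of partition boundaries (c6); by contrast the projection identity of Step~1 and the transfer-operator bookkeeping behind the renewal identity of Step~2 are routine, modulo some care with null sets.
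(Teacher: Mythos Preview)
Your argument is essentially sound, but it follows a quite different route from the paper's own proof, which is worth spelling out.

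The paper does \emph{not} reprove convergence or the identification of the limit. Instead it simply checks that the abstract hypotheses of Keller's Theorem~2 hold: condition~(2.2) is immediate from $\hat{\mathcal B}=\mathcal D\vee\pi^{-1}\mathcal B$, and condition~(2.3) --- that two lifts of the same base point eventually merge under $\hat T$ --- is verified from (c6) by taking the null set $N=\pi(\bigcup_u B_u)$, after which Keller's theorem is invoked wholesale. In particular, the paper's proof uses neither (c4) nor any Markov-chain machinery; the locally constant Jacobian only enters later, in Lemma~\ref{l:mun} and Proposition~\ref{p:rhoD}.

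Your approach, by contrast, exploits (c4) from the outset: the transfer operator preserves level-constant functions, so the evolution of $(\hat\mu_n(D_u))_u$ becomes a countable-state Markov chain, and the classical Ces\`aro limit theorem for such chains gives convergence of each coordinate. This is more explicit and in effect absorbs Lemma~\ref{l:mun} into the proof of Proposition~\ref{p:kel2}. Your Step~1 is also cleaner than what the paper does, since it pins down $\hat\mu_n\circ\pi^{-1}=\mu$ exactly rather than only in the limit. The cost is that your convergence argument is special to the zero-distortion setting, whereas Keller's theorem (and hence the paper's proof) applies without (c4).

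Where the two routes reconverge is exactly where you say: the implication ``non-zero vague limit $\Rightarrow$ probability measure'' is the substantive content, and you defer to the Remark after (c6), which is itself an unproved consequence of Keller's theorem. So in the end both proofs lean on Keller for the same step; you have simply replaced Keller's abstract convergence argument by a concrete Markov-chain one, while the paper replaces nothing and cites Keller for all of it. Your self-diagnosis in Step~3 of what is routine versus what is genuinely delicate is accurate.
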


\begin{proof}
These results follow from Theorem 2 from \cite{Kel89} by Keller, so we only need to check that the conditions of that theorem are satisfied: $T$ needs to be invariant and ergodic and there has to be a $\mu$-null set $N \subset X$ such that $\hat N = \pi^{-1}N$ has the properties
\begin{itemize}
\item[(2.2)] $\pi^{-1}(A) \in \hat{\mathcal B} \, (\text{mod }\hat N) \Rightarrow A \in \mathcal B \, (\text{mod }\mu)$ for all $A \subseteq X$,
\item[(2.3)] $\hat x, \hat y \in \hat X \setminus \hat N$ and $\pi^{-1}(\hat x) = \pi^{-1}(\hat y)$ imply that $\exists n \ge 0$ s.t.~$\hat T^n \hat x = \hat T^n \hat y$.
\end{itemize}
The ergodicity of $T$ is (c5) and the $T$-invariance is assumed in the proposition. Property (2.2) is satisfied since $\hat{\mathcal B} = \mathcal D \vee \pi^{-1}(\mathcal B)$. Recall the definition of the sets $B_{u,n}$ from (\ref{q:bun}) and set $B_u = \cap_{n \ge 1} B_{u,n}$. Property (2.3) follows from (c6) when we take $N = \pi (\cup_{u \in \alpha} B_u)$, since this implies that the points $\hat x = (x,D_u)$ and $\hat y = (y,D_v)$ are not at the boundary of $D_u$ and $D_v$ respectively. Hence, there is some $n$ and some cylinder $Z_n$, such that $x,y \in Z_n$ and $Z_n$ is contained in the interior of $D_u$ and $D_v$ and this implies that $\hat T^n \hat x = \hat T^n \hat y$. This establishes the existence of a unique vague limit $\hat \mu$. If $\hat \mu \not \equiv 0$, then Theorem 2 from \cite{Kel89} gives the rest of the statement: $\hat \mu \circ \pi^{-1}=\mu$ and $\hat \mu$ is ergodic.  
\end{proof}

\vskip .3cm
\noindent Theorem 3 from \cite{Kel89} by Keller gives conditions under which $\hat \mu \not \equiv 0$ in case of $T$-invariance.
\begin{theorem}[Theorem 3, \cite{Kel89}]\label{thm:Keller3}
Assume that $(X, \mathcal B, \mu, T)$ satisfies (c1)-(c6) and that $\mu$ is $T$-invariant. If $h_{\mu}(T) > \text{cap}(T)$, where $h_{\mu}(T)$ denotes the metric entropy, then the sequence $\{ \hat \mu_n\}_{n \ge 1}$ converges to an ergodic $\hat T$-invariant probability measure $\hat \mu$ for which $\hat \mu \circ \pi^{-1}=\mu$. Moreover, $h_{\hat \mu}(\hat T)=h_{\mu}(T)$.
\end{theorem}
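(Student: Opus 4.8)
The plan is to derive Theorem~\ref{thm:Keller3} from Theorem~3 of Keller~\cite{Kel89}: the bulk of the work is matching hypotheses, and the only genuinely new input beyond Proposition~\ref{p:kel2} is the consequence of the entropy inequality. In the proof of Proposition~\ref{p:kel2} the structural assumptions of Keller's lifting machinery were already checked under (c1)--(c6): $\mu$ is ergodic (by (c5)) and, here, $T$-invariant; property (2.2) holds because $\hat{\mathcal B} = \mathcal D \vee \pi^{-1}(\mathcal B)$; and property (2.3) holds off the $\mu$-null set $N = \pi(\cup_{u}B_u)$ by (c6). This already yields a unique vague limit $\hat\mu$ of the sequence $\{\hat\mu_n\}$. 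The extra hypothesis of Theorem~3 is $h_\mu(T) > \text{cap}(T)$, whose sole role is to guarantee $\hat\mu \not\equiv 0$; once that is known, Proposition~\ref{p:kel2} gives that $\hat\mu$ is an ergodic $\hat T$-invariant probability measure with $\hat\mu\circ\pi^{-1} = \mu$. A preliminary point I would settle first is that $\text{cap}(T)$ as defined in \eqref{q:cap} through the sets $\partial_n D_u$ coincides with, or dominates, the boundary-complexity growth rate appearing in Keller's hypothesis; this is routine bookkeeping.

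The substantive point is therefore the non-vanishing of $\hat\mu$. Let $\hat X^{(m)} = \sqcup_{u \le m} D_u$ be the union of the first $m$ tower levels. Unwinding \eqref{eq:mu_bar}--\eqref{q:mun} and using $\hat\mu_n(\hat X) = 1$, one gets
\[ 1 - \hat\mu_n\big(\hat X^{(m)}\big) = \frac1n \sum_{k=0}^{n-1} \mu(E_{k,m}), \]
where $E_{k,m}$ is the set of $x \in X$ whose level in the Hofbauer tower, starting from the base, lies outside $\{D_1,\dots,D_m\}$ after $k$ steps. Since a base point cannot rise more than a bounded number of levels above $k$, one has $E_{k,m}=\emptyset$ once $k$ is too small, so the Cesaro average is bounded by $\sup_k\mu(E_{k,m})$. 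A point in $E_{k,m}$ has, over a window of length comparable to $m$, followed a non-dropping path in $(\mathcal D,\to)$, which forces its cylinder over that window to be squeezed inside the boundary regions $B_{v,\ell}$ of the levels it passes through, i.e.\ to arise from the sets $\partial_\ell D_v$; by the definition of $\text{cap}(T)$ the number of such cylinders of the relevant length is at most $e^{(\text{cap}(T)+\eps)m}$. By the Shannon--McMillan--Breiman theorem, for each $\eps>0$ there is a set of $\mu$-measure $>1-\eps$ on which a cylinder of length $\ell$ has measure $\le e^{-(h_\mu(T)-\eps)\ell}$ for $\ell$ large, and the $T$-invariance of $\mu$ lets one move the window to the origin; combining gives $\mu(E_{k,m}) \le e^{-(h_\mu(T)-\text{cap}(T)-2\eps)m} + \delta_m$ with $\delta_m\to0$, uniformly in $k$. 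Since $\hat X^{(m)}$ is compact, vague convergence yields $\hat\mu(\hat X^{(m)}) \ge \liminf_n \hat\mu_n(\hat X^{(m)}) \ge 1 - e^{-(h_\mu(T)-\text{cap}(T)-2\eps)m} - \delta_m$, and letting $m\to\infty$ forces $\hat\mu(\hat X) = 1$. Making the counting of ``boundary-trapped'' cylinders precise — tracking exactly how levels evolve and which itineraries keep a point deep in the tower, including the role of the initial level — is the main obstacle; it is precisely the content of Keller's proof, so in the write-up I would either reproduce that estimate or, the hypotheses having been matched, simply invoke it.

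It remains to establish the entropy equality. The inequality $h_{\hat\mu}(\hat T) \ge h_\mu(T)$ is free, since $(X,\mathcal B,\mu,T)$ is a factor of $(\hat X,\hat{\mathcal B},\hat\mu,\hat T)$ via $\pi$ and factor maps do not increase entropy. For the reverse inequality I would use that $\mathcal D$ is a one-sided generating partition for $\hat T$ (by the identity $\mathcal D\vee\pi^{-1}(\mathcal B) = \bigvee_{k\ge0}\hat T^{-k}\mathcal D$ established in the proof of the Lemma in Section~\ref{ss:Hofbauer}). The key observation is that the $\mathcal D$-itinerary of $\hat x$ over times $0,\dots,n-1$ is determined by the pair (level at time $0$, $\mathcal Z$-itinerary of $\pi(\hat x)$ over the same times), because the arrow relation makes the passage $D_v \mapsto Z_j \cap TD_v$ a deterministic function of the present level and the next $\mathcal Z$-symbol. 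Hence $\bigvee_{k=0}^{n-1}\hat T^{-k}\mathcal D \preceq \mathcal D \vee \pi^{-1}(\mathcal Z_{n-1})$, and by subadditivity of entropy
\[ H_{\hat\mu}\Big(\bigvee_{k=0}^{n-1}\hat T^{-k}\mathcal D\Big) \le H_{\hat\mu}(\mathcal D) + H_\mu(\mathcal Z_{n-1}); \]
dividing by $n$ and letting $n\to\infty$ gives $h_{\hat\mu}(\hat T,\mathcal D) \le h_\mu(T,\mathcal Z) = h_\mu(T)$, provided $H_{\hat\mu}(\mathcal D) < \infty$. That finiteness is again where $h_\mu(T) > \text{cap}(T)$ enters: the $\hat\mu$-mass of the deep levels of the tower decays exponentially at a rate beating the sub-exponential (rate $\le \text{cap}(T)$) growth of the number of levels at a given depth, so the series $-\sum_u \hat\mu(D_u)\log\hat\mu(D_u)$ converges. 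With $H_{\hat\mu}(\mathcal D) < \infty$ and $\mathcal D$ a generator, the Kolmogorov--Sinai theorem gives $h_{\hat\mu}(\hat T) = h_{\hat\mu}(\hat T,\mathcal D)$, which together with the free inequality completes the proof.
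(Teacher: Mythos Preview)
Your proposal is correct and, at its core, takes the same approach as the paper: verify that conditions (c1)--(c6) together with $T$-invariance match the hypotheses of Keller's Theorem~3, then invoke that result. The paper's own proof is in fact just two sentences: it observes that $T$-invariance gives (c2) and that the remaining conditions (c1), (c3), (c5), (c6) are exactly what Keller requires, so his theorem applies verbatim---no further argument is given for $\hat\mu\not\equiv0$ or for the entropy equality, since both are contained in the cited result.

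Where you differ is in scope: you go well beyond the paper by sketching the mechanism behind non-vanishing (Shannon--McMillan--Breiman against the capacity bound on boundary cylinders) and by supplying an independent proof of $h_{\hat\mu}(\hat T)=h_\mu(T)$ via the observation that the $\mathcal D$-itinerary is determined by the initial level together with the $\mathcal Z$-itinerary, plus $H_{\hat\mu}(\mathcal D)<\infty$. These sketches are sound and illuminating, but for the paper's purposes they are unnecessary---the statement is labelled as Keller's theorem precisely because the authors treat it as a black box. Your explicit acknowledgement that one could ``simply invoke'' Keller after matching hypotheses is exactly what the paper does.
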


\begin{proof}
Note that $T$-invariance of $\mu$ implies condition (c2). The result by Keller is then valid under (c1), (c3), (c5) and (c6). 
\end{proof}

\vskip .3cm
Invariance of $\mu$ is essential in Theorem~\ref{thm:Keller3} because otherwise $h_\mu(T)$ is undefined, and $\hat \mu \circ \pi^{-1} = \mu$ will fail. However, Theorem~\ref{thm:Keller3} has a version which applies to measures $\mu$ that are non-singular but not necessarily $T$-invariant, as long as (c2) holds. This is due to Keller \cite[Theorem 3(a)]{Kel90} for piecewise smooth interval maps, see also \cite{dMvS}, and \cite{BT07} for the setting of complex polynomials. 
We give one more example for piecewise affine
and expanding maps in $\R^q$. However, it seems fair to say
that proving liftability is not easier than proving the existence
of an invariant measure equivalent to Lebesgue.

\begin{proposition}\label{prop:noninvariant}
Let $X \subset \R^q$ be compact and assume that $T:X \to X$ is piecewise affine and expanding w.r.t.~a finite partition $\mathcal Z$ such that each
$Z \in \mathcal Z$ is a polytope bounded by $(q-1)$-dimensional hyperplanes. Then Lebesgue measure $m^q$ lifts to the Hofbauer tower.
\end{proposition}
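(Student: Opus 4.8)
The plan is to show that the Cesaro means $\hat\mu_n$ of \eqref{q:mun}, formed from the lift $\bar\mu$ of $m^q$ as in \eqref{eq:mu_bar}, converge vaguely to a non-zero measure. First I would observe that we are inside the standing framework: (c1) and (c3) are clear for a piecewise affine expanding map (the branches are continuous and injective, and cylinders shrink geometrically), and (c4) holds because $DT$ is constant on each $Z_j$, with $s_j$ the value of $|\det DT|$ there, which is $>1$. Then Proposition~\ref{p:erg} applies, and since by it every non-zero vague limit point of $\{\hat\mu_n\}$ is automatically a $\hat T$-invariant ergodic probability measure, the whole task reduces to ruling out a complete escape of mass to infinity up the tower (and to checking that all limit points agree).

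For this I would first record the geometric consequences of expansion. There is $\lambda>1$ with all singular values of each branch $DT|_{Z_j}$ at least $\lambda$, so every cylinder $Z_{j_0\cdots j_n}$ is a polytope with $\diam Z_{j_0\cdots j_n}\le C\lambda^{-n}$, and from $m^q(T^nZ_{j_0\cdots j_n})=s_{j_0}\cdots s_{j_{n-1}}\,m^q(Z_{j_0\cdots j_n})\ge\lambda^{qn}m^q(Z_{j_0\cdots j_n})$ also $m^q(Z_{j_0\cdots j_n})\le m^q(X)\,\lambda^{-qn}$. Moreover each component $D\in\mathcal D$ of level $\ell$ is a polytope whose $(q-1)$-dimensional faces lie in the flat set $\bigcup_{0\le s\le\ell}T^s(\bigcup_j\partial Z_j)$, a finite union of pieces of affine hyperplanes. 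The decisive point — the place where the polytope-bounded-by-hyperplanes hypothesis is indispensable — is that this flatness keeps the combinatorial growth of the Hofbauer tower small: since the branches are affine there is no folding or accumulation of curvature near a face (the mechanism by which a general $C^1$ piecewise expanding map can produce exponentially many new components per step), and one should be able to prove that the capacity $\operatorname{cap}(T)$ of \eqref{q:cap}, as well as the exponential growth rate of the number $a_\ell$ of level-$\ell$ components, are strictly dominated by $q\log\lambda$ — in fact one expects $\operatorname{cap}(T)=0$. I expect this complexity estimate to be the main obstacle: without the polytope hypothesis it can fail, as Tsujii's piecewise affine planar maps with no absolutely continuous invariant measure show, and in those examples (c6) fails as well.

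Granting this, the conclusion follows. If the $\hat T^n$-image of a cylinder sits above level $M$ then, since from the base (level $0$) a level exceeding $M$ cannot be reached in $M$ steps, that cylinder must have stayed off the base $\hat X_0$ throughout the $M$ steps preceding time $n$; combining this with the measure bound $m^q(Z_{j_0\cdots j_m})\le m^q(X)\lambda^{-qm}$ and the combinatorial control on how many distinct components such a cylinder can visit should give a uniform estimate $\hat\mu_n(\{\hat x:\operatorname{level}(\hat x)>M\})\le C\theta^M$ with $\theta<1$. This tightness prevents escape of mass; and, exactly as in the proof of Proposition~\ref{p:kel2}, hypothesis (c6) ensures that none of the $\hat\mu_n$-mass concentrates on the $m^q$-null set $\bigcup_u\pi(\partial D_u)$, which yields that the vague limit is unique and non-zero. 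Hence $\hat\mu_n\to\hat\mu$ with $\hat\mu$ a $\hat T$-invariant ergodic probability measure, whose projection $\pi_*\hat\mu=\lim_n\frac1n\sum_{k=0}^{n-1}T^k_*m^q$ is a $T$-invariant probability measure absolutely continuous with respect to $m^q$ — in line with the remark preceding the proposition that proving liftability of Lebesgue is no easier than producing such a measure.
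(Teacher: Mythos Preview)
Your approach diverges from the paper's precisely at the step you yourself flag as ``the main obstacle'': the complexity bound. You hope the polytope hypothesis forces $\operatorname{cap}(T)=0$, or at least $\operatorname{cap}(T)<q\log\lambda$, but you give no argument, and in dimension $q\ge2$ the expectation $\operatorname{cap}(T)=0$ is generally false. The boundary $\partial D_u$ is a union of pieces of $(q-1)$-dimensional hyperplanes, and a single hyperplane can meet exponentially many $n$-cylinders (roughly $N^{n(q-1)/q}$ of the $N^n$ cylinders, by a dimension count), so $\#(\partial_n D_u)$ typically grows exponentially. The flatness you invoke prevents curvature blow-up but does nothing to stop a flat face from slicing through many small boxes. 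Your tightness sketch (``combining this with the measure bound \ldots\ and the combinatorial control on how many distinct components such a cylinder can visit'') therefore rests on an unproven and likely over-optimistic estimate. Incidentally, your reference to ``Tsujii's piecewise affine planar maps with no absolutely continuous invariant measure'' is off: the paper \cite{Tsujii} cited here proves \emph{existence} of an ACIM for exactly the class in the proposition; the well-known higher-dimensional counterexamples without ACIM are piecewise smooth, not piecewise affine.

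The paper sidesteps the combinatorial complexity question entirely. It first invokes Tsujii's theorem to obtain a $T$-invariant probability $\mu\ll m^q$ with \emph{bounded} density $h$, passes to an ergodic component, and then runs a Birkhoff-theorem argument for $m^q$-typical points: the frequency of times $k\le n$ at which $T^kx$ fails to reach ``$\eta$-large scale'' (i.e.\ some earlier visit to the $\eta$-neighbourhood of $\partial\mathcal Z$ has not yet been expanded away) is bounded by $\frac{1}{\log\rho}\int_{B(\eta)}(\log\eta-\log d(\xi,\partial\mathcal Z))\,d\mu(\xi)=O(\eta)$, using that $\sup h<\infty$ and $m^{q-1}(\partial\mathcal Z)<\infty$. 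At each large-scale time the lifted orbit lies in a fixed compact part of the tower depending only on $\eta$, and this gives tightness of $(\hat\mu_n)$ directly. So the ACIM is used not as the conclusion but as an input to the Ergodic Theorem, in order to control recurrence to $\partial\mathcal Z$; this is exactly why the paper remarks that proving liftability here is no easier than proving existence of the ACIM itself.
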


\begin{proof}
Tsujii \cite{Tsujii} proved that piecewise affine expanding maps as above have an absolutely continuous invariant probability measure $\mu$ with bounded density $h = \frac{d\mu}{dx}$.
Moreover, there are only finitely many Lebesgue ergodic components (only one if $T$ is transitive), so by passing to a component, we can assume that $q$-dimensional Lebesgue measure $m^q$ is ergodic.

In short, there is no need to use the Hofbauer tower approach to find $\mu$. We prove the liftability nonetheless, because it will assist us in creating the natural extension.

Let $\rho > 1$ be the expansion factor: $d(T(x), T(y)) > \rho d(x,y)$ (where $d$ stands for the Euclidean distance), whenever $x$ and $y$ belong to the same partition element $Z$. Let $S := m^{q-1}(\partial{\mathcal Z})$ be the $(q-1)$-dimensional measure of the hyperplanes forming the partition $\mathcal Z$; this quantity is finite by the assumptions on $\mathcal Z$. For $\eta > 0$ small, let $B(\eta)$ be an $\eta$-neighbourhood of $\partial{\mathcal Z}$ and let $\chi_\eta$ be the indicator function of $B(\eta)$. If $d(x, \partial{\mathcal Z}) < \eta$, and $y \in \partial{\mathcal Z}$ is closest to $x$, then it takes at most $\lceil \frac{\log \eta - \log d_\eta(x,y)}{\log \rho} \rceil$ iterates to move $x$ and $y$ at least $\eta$ apart. Hence, for the first $n$ iterates in the orbit of $x$,
\[ \sum_{j=0}^{n-1} \chi_\eta(T^j(x))  \cdot  \frac{\log \eta - \log d(T^j(x), \partial {\mathcal Z})}{\log \rho},\] 
is an upper bound for the number of iterates $k$ that $T^k x$ is less than $\eta$ away from the image of $\partial{\mathcal Z}$ taken along the same branch $T^{k-j}$ as $T^j x$ at its previous close visit to $\partial{\mathcal Z}$. For the remaining iterates $k$, there is a neighbourhood $U_k \owns x$ such that $T^k$ maps $U_k$ homeomorphically (and in fact affinely) onto an $\eta$-ball around $T^kx$. In other words, $x$ has reached {\em $\eta$-large scale} at time $k$.

By the Ergodic Theorem, for $m^q$-a.e.~$x$,
\[ \begin{array}{l}
\displaystyle \frac1n \sum_{k=0}^{n-1} \chi_\eta(T^k(x))  \cdot \frac{\log \eta - \log d(T^k(x), \partial {\mathcal Z})}{\log \rho}\\
\vspace{.3cm}
\displaystyle \hspace{2cm} \to \frac1{\log \rho}\int_{B(\eta)} (\log \eta - \log d(\xi, \partial {\mathcal Z})) \ d\mu(\xi) \\ 
\vspace{.3cm}
\displaystyle \hspace{2cm} \le  \frac{2S \sup h}{\log \rho}\int_0^\eta (\log \eta - \log \xi) d\xi = \frac{2S \sup h}{\log \rho} \cdot \eta.
\end{array}\]
Thus, the limit frequency that Lebesgue typical points reach $\eta$-large scale is $1-\frac{2 \eta S \sup h}{\log \rho} \ll 1$ for small $\eta$. When lifting the orbit of such typical $x$ to the Hofbauer tower, it will spend a similar proportion of time in a compact part $K$ of the tower, where $K$ depends only on $T$ and $\eta$. In probabilistic terms, the sequence $\left( \frac1n \sum_{k=0}^{n-1} \bar{m^q} \circ \hat T^{-k} \right)_n$ is tight, and this suffices to conclude that Lebesgue measure is liftable, say to $\hat \mu$. Naturally, $\hat \mu \circ \pi^{-1} = \mu$.
\end{proof}

\vskip .3cm
The next two examples show that expansion of Jacobian (rather than uniform expansion in all directions) or having positive Lyapunov exponents can both be insufficient for liftability.

\begin{figure}[ht]
\centering
\subfigure[]{\includegraphics[scale=.65]{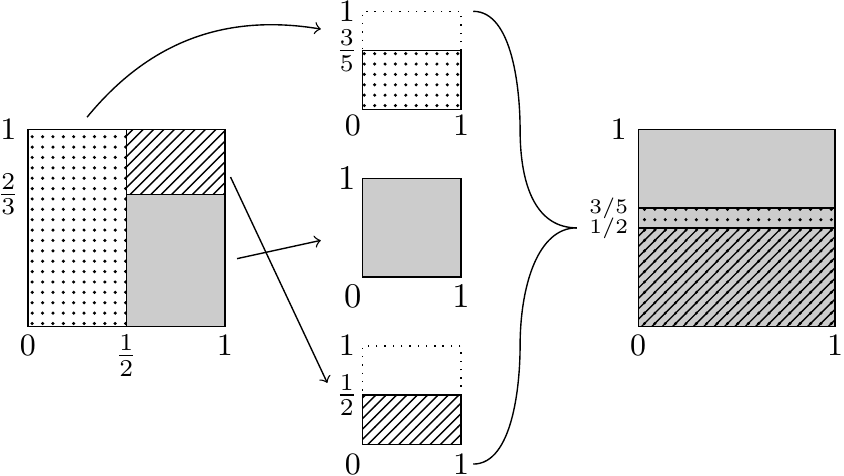}}
\hspace{.8cm}
\subfigure[]{\includegraphics[scale=.6]{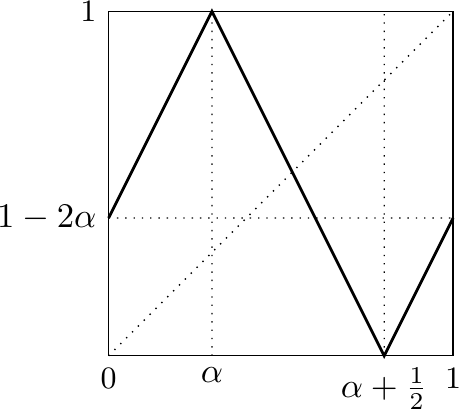}}
\caption{In (a) we see the map from Example~\ref{ex:skew}. (b) shows the map from Example~\ref{ex:raith}, which is a bimodal interval map with slope $\pm2$ and turning points $\alpha$ and $\frac12+\alpha$.}
\label{fig:skewraith}
\end{figure}

\begin{ex}\label{ex:skew}{\rm
Consider the skew product $T:[0,1) \times [0,1)$ defined as
\[ 
T(x,y) = \big(2x (\bmod 1) ,\ a(x)y (\bmod 1)\big)
\quad \text{ for } \quad
a(x) = \begin{cases}
\frac35 & \text{ if } x \in [0, \frac12),\\[2mm]
\frac32 & \text{ if } x \in [\frac12, 1),
\end{cases}
\]
see Figure~\ref{fig:skewraith}(a). This map is transitive and the Jacobian of $T$ w.r.t.\ Lebesgue measure $m^2$ is expanding and locally constant: $J_{m^2,T}(x) = 2a(x)$. Note that for the {\em omega-limit set} $\omega(x,y) = \cap_{n \ge 0} \overline{\{ T^k (x,y)\, : \, k >n\}}$ we have $\omega(x,y) = [0,1) \times \{ 0 \}$ for $m^2$-a.e.~$(x,y)$; this is by a standard argument of skew-products because the Lebesgue typical transversal Lyapunov exponent is $\int \log a(x) \ dx = \log 9/10 < 0$. This implies that the unique weak limit measure of $\frac1n \sum_{k=0}^{n-1} m^2 \circ T^{-k}$ is one-dimensional Lebesgue measure on $[0,1) \times \{ 0 \}$, \ie Lebesgue measure $m^2$ is not liftable.}
\end{ex}

\begin{ex}\label{ex:raith}{\rm
Define the interval map $T:[0,1) \to [0,1)$ by
\[ T(x) = \begin{cases}
2x+1-2\alpha & \text{ if } x \in [0,\alpha),\\
1+\alpha-2x & \text{ if } x \in [\alpha, \alpha + \frac12),\\
2x-2\alpha-1 & \text{ if } x \in [\alpha + \frac12, 1),\\
\end{cases}\]
see Figure~\ref{fig:skewraith}(b). For certain values of $\alpha$, the set of points whose orbits stay in $[0,\alpha] \cup [\alpha + \frac12, 1)$ form a Cantor set $H$ of zero Hausdorff 
dimension on which $T$ is semi-conjugate to a circle rotation. The measure obtained from lifting Lebesgue measure to $H$ is invariant (hence of Jacobian $1$), has zero entropy but Lyapunov exponent $\log 2$. It is not liftable to the Hofbauer tower. This example was inspired by \cite{HR89}, see also \cite{BT09}.
}\end{ex}

\subsection{Piecewise constant Radon-Nikodym derivatives}\label{ss:radonnikodym}
Assumption (c4) implies that if $D \to D'$ and $D \subset Z_j$, then the Jacobian $J_{\bar \mu, \hat T} = s_j$ on $D$. The next lemma shows that the Radon-Nikodym derivative $\frac{d\hat \mu}{d\bar \mu}$ is constant on $D$ as well. We need this for the construction of the natural extension.

\begin{lemma}\label{l:mun}
If (c1)-(c4) hold for the system $(X, \mathcal B, \mu, T)$, then $\hat \mu_n \ll \bar \mu$ for each $n \ge 1$. Moreover, the densities $\frac{d\hat \mu_n}{d\bar \mu}$ are constant on each $D \in \mathcal D$.
\end{lemma}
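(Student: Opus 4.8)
The plan is to prove the two assertions together by analyzing how the single-step push-forward $\bar\mu \circ \hat T^{-1}$ interacts with the levels $D \in \mathcal D$, and then using that $\hat\mu_n$ is built from Cesàro averages of such push-forwards restricted to the base $\hat X_0$. First I would record the key local fact coming from (c4) together with the Markov structure of the tower: if $D \to D'$ with $D \subset Z_j$, then $\hat T$ restricted to $D$ is the injective map $T_{Z_j}$ (transported to the appropriate level), so for every measurable $A \subseteq D'$ one has $\bar\mu(\hat T^{-1}A \cap D) = \mu(\pi(\hat T^{-1}A\cap D)) = s_j^{-1}\,\mu(\pi(A)) = s_j^{-1}\,\bar\mu(A)$, where I used that $\pi$ is a measure isomorphism between a level and its image in $X$ and that $\mu(TB) = s_j\mu(B)$ for $B \subseteq Z_j$ from (c4). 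Since also $\bar\mu(A\cap D') = \mu(\pi(A))$, this says that on each level $D'$ the measure $\bar\mu \circ \hat T^{-1}$ restricted to the part of $D'$ reachable from a fixed predecessor $D$ is just a constant multiple $s_j^{-1}$ of $\bar\mu|_{D'}$.

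Next I would sum over predecessors. A point $\hat x = (x, D') \in D'$ lies in $\hat T^{-1}(\hat T \hat x)$, and its preimages under $\hat T$ lying in $\hat X_0$ correspond to the finitely many arrows $D \to D'$ with $D \in \hat X_0$, i.e.\ $D = Z_j$ for some $j$; for such an arrow the relevant preimage piece of $D'$ is $\hat T^{-1}(D') \cap Z_j = (T_{Z_j})^{-1}(D' \cap T Z_j)$, a sub-cylinder, and on it the local density contribution is the constant $s_j^{-1}$. Hence $\bar\mu(\hat T^{-1}A \cap \hat X_0) = \sum_{j: Z_j \to D'} s_j^{-1}\,\bar\mu(A \cap D' \cap TZ_j)$ for $A \subseteq D'$; more precisely, partitioning $D'$ according to which sub-cylinders $D' \cap TZ_j$ the set $A$ meets, one sees $\frac{d(\bar\mu \circ \hat T^{-1}(\,\cdot\, \cap \hat X_0))}{d\bar\mu}$ is piecewise constant on $D'$, taking the value $s_j^{-1}$ (or the sum of such, when the pieces $TZ_j$ overlap within $D'$) — and in fact, because the $TZ_j$ partition $D'$ up to $\bar\mu$-null sets by construction of the levels of the Hofbauer tower, this density is genuinely constant on each level $D'$. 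Iterating, $\frac{d(\bar\mu \circ \hat T^{-k}(\,\cdot\, \cap \hat X_0))}{d\bar\mu}$ is constant on each $D \in \mathcal D$ for every $k \ge 0$ (one checks this inductively, using at each step that the level structure refines the cylinder structure so the constant from level $D''$ pulls back to a constant on $D$). Averaging over $k = 0,\dots,n-1$ as in \eqref{q:mun} then shows $\hat\mu_n \ll \bar\mu$ with $\frac{d\hat\mu_n}{d\bar\mu}$ constant on each $D$.

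The main obstacle, and the point needing the most care, is the bookkeeping in the previous step: one must make sure that the pieces $D' \cap TZ_j$ over the in-arrows $Z_j \to D'$ really do tile $D'$ modulo $\bar\mu$-null sets, so that "piecewise constant with the pieces being all of $D'$" collapses to "constant". This is exactly where the Hofbauer construction is used — a level $D'$ is an equivalence class of sets $T^{m+1}Z_{i_0\cdots i_m j}$, and the definition of the arrow relation $D \to D'$ as $D' = Z_j \cap TD$ guarantees that tracing back one step lands inside a single partition element $Z_j$ on each preimage component, while different components sit in disjoint cylinders. A secondary technical point is the null-set hygiene: $\hat T$ is only injective on each level up to the boundary sets, and the identity $\mu(TB) = s_j \mu(B)$ from (c4) is a statement about measurable subsets of $Z_j$, so I would carry a standing "mod $\bar\mu$-null sets" qualifier throughout and invoke (c2) to see that preimages of null sets are null. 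Once the single-step claim is pinned down, the induction and the final Cesàro average are routine.
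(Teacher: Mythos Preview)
Your strategy---show each term $\bar\mu(\hat T^{-k}(\cdot)\cap\hat X_0)$ has level-wise constant density with respect to $\bar\mu$, then average---is sound, and the basic one-step identity $\bar\mu(\hat T^{-1}A\cap D)=s_j^{-1}\bar\mu(A)$ for $D\subset Z_j$, $D\to D'$, $A\subseteq D'$ is the right engine. However, you have the Markov property of the tower backwards. You worry that the sets $D'\cap TZ_j$ (over $j$ with $Z_j\to D'$) must \emph{tile} $D'$, and that the density might be merely piecewise constant with value $s_j^{-1}$ on the $j$-th piece. But an arrow $Z_j\to D'$ means by definition $D'=TZ_j\cap Z_{j'}$ for the unique $j'$ with $D'\subseteq Z_{j'}$, so $D'\subseteq TZ_j$ and hence $D'\cap TZ_j=D'$ for \emph{every} incoming $Z_j$. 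These sets do not partition $D'$; each one is all of $D'$. Your displayed sum therefore collapses immediately to $\bigl(\sum_{j:Z_j\to D'} s_j^{-1}\bigr)\bar\mu(A)$, and constancy is automatic---there is no bookkeeping obstacle. Taken literally, your picture (disjoint pieces with density $s_j^{-1}$ on each) would produce a \emph{non}-constant density whenever two incoming slopes differ, contradicting the lemma; so it is worth internalising this point before proceeding to the induction.

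Once that is fixed, your inductive route does go through: with $c_{k-1}$ constant on levels and the unrestricted one-step identity above, one gets $c_k(D')=\sum_{D\to D'} s_{j(D)}^{-1}c_{k-1}(D)$. The paper takes a shorter, non-inductive path: for each $k$ it decomposes $\hat T^{-k}A\cap\hat X_0$ directly over the cylinders $Z_{j_0\cdots j_k}$ with $T^kZ_{j_0\cdots j_k}=D$, and (c4) gives $\mu\bigl(\pi(\hat T^{-k}A\cap\hat X_0)\cap Z_{j_0\cdots j_k}\bigr)=(s_{j_0}\cdots s_{j_{k-1}})^{-1}\bar\mu(A)$ in one line. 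Summing yields the explicit closed form $\rho_n(D)=\tfrac1n\sum_{k=0}^{n-1}\sum(s_{j_0}\cdots s_{j_{k-1}})^{-1}$, which is used later; your approach would recover the same constants only recursively.
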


\begin{proof}
Fix $D \in \mathcal D$ and take a measurable set $A \subseteq D$. Note that for each $(k+1)$-cylinder $Z_{j_0 \cdots j_k}\in {\mathcal Z}_k$ with $T^kZ_{j_0 \cdots j_k}=D$, by (c4) we have
\[ (s_{j_0}\cdots s_{j_{k-1}}) \, \mu\big( \pi(\hat T^{-k}A \cap \hat X_0)\cap Z_{j_0\cdots j_k} \big) 
= \mu\big(\pi(A)\big) = \bar \mu(A), \]
where an empty product $s_{j_0}\cdots s_{j_{k-1}}$ for $k=0$ is taken as $1$. Hence,
\begin{eqnarray*}
\bar \mu \big( \hat T^{-k}A \cap \hat X_0 \big) &=& 
\sum_{\stackrel{ Z_{j_0\cdots j_k} \in \mathcal Z_k:}{T^k Z_{j_0 \cdots j_k}\ = \, D}}  \mu\big(\pi(\hat T^{-k}A\cap \hat X_0)\cap Z_{j_0\cdots j_k}\big)\\
&=& \sum_{\stackrel{ Z_{j_0\cdots j_k} \in \mathcal Z_k:}{T^k Z_{j_0 \cdots j_k}\ = \, D}} \frac{\bar \mu(A)}{s_{j_0} \cdots \, s_{j_{k-1}}}.
\end{eqnarray*}
Passing to the Cesaro mean, this implies that $\hat \mu_n \ll \bar \mu$. Also, we can write $\hat \mu_n(A) = \int_A \, \rho_n(D)\, d\bar \mu$ with
\begin{equation}\label{q:pwdensity}
\rho_n(D):=  \frac{1}{n} \, \sum_{k=0}^{n-1} \, \sum_{\stackrel{ Z_{j_0 \cdots j_k}:}{T^k Z_{j_0 \cdots j_k}\ = \, D}} 
(s_{j_0} \cdots \, s_{j_{k-1}})^{-1}.
\end{equation}
Since $\rho_n(D)$ only depends on $D$, we get the lemma with $\frac{d \hat \mu_n}{d \bar \mu} \big|_D= \rho_n(D)$. 
\end{proof}

\begin{proposition}\label{p:rhoD}
Assume that (c1)-(c4) hold for $(X, \mathcal B, \mu, T)$, and that $\hat \mu$ is a non-zero vague limit point of $\{ \hat \mu_n\}_{n \ge 1}$. Then $\hat \mu \ll \bar \mu$ and the density $\frac{d\hat \mu}{d \bar \mu}$ is constant on each set $D \in \mathcal D$ and given by $\rho(D):=\frac{d\hat \mu}{d \bar \mu}\big|_D = \frac{\hat \mu(D)}{\bar \mu(D)}$. 
\end{proposition}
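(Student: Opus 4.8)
The plan is to obtain $\hat\mu \ll \bar\mu$ from Lemma~\ref{l:mun} by a vague-convergence argument, and then to pin down the density via the $\hat T$-invariance of $\hat\mu$ together with the Markov structure of the tower. First I would record that by Lemma~\ref{l:mun} each $\hat\mu_n \ll \bar\mu$ with $\frac{d\hat\mu_n}{d\bar\mu}\big|_D = \rho_n(D)$ a constant on each $D$. Since $\hat\mu$ is a vague limit of a subsequence $\hat\mu_{n_i}$, for every $D \in \mathcal D$ (which is compact) and every $\bar\mu$-continuity subset $A \subseteq D$ we get $\hat\mu(A) = \lim_i \rho_{n_i}(D)\,\bar\mu(A)$. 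In particular, taking $A = D$ shows that $\rho_{n_i}(D) \to \hat\mu(D)/\bar\mu(D) =: \rho(D)$ (note $\bar\mu(D) > 0$ since $\mu(Z) > 0$ for all cylinders), and then for general measurable $A \subseteq D$ we get $\hat\mu(A) = \rho(D)\,\bar\mu(A)$ by approximating $A$ from inside/outside by continuity sets, or simply by noting that $\hat\mu|_D$ and $\bar\mu|_D$ are both finite measures on $D$ with the former equal to a constant multiple of the latter on a generating algebra. Summing over $D \in \mathcal D$ and using $\sigma$-additivity yields $\hat\mu \ll \bar\mu$ on all of $\hat{\mathcal B} = \mathcal D \vee \pi^{-1}(\mathcal B)$, with $\frac{d\hat\mu}{d\bar\mu}$ equal to the $D$-piecewise-constant function $\rho(D)$.

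The one subtlety is justifying that $\rho_{n_i}(D)$ actually converges to a finite limit and that vague convergence transfers to the restricted measures on each compact level $D$; this is where the footnote in the statement of liftability is used — the vague topology here is the weak topology on compacta, so $\hat\mu_{n_i}|_D \to \hat\mu|_D$ weakly on the compact set $\overline D$. From weak convergence on $\overline D$ one extracts $\hat\mu(\mathrm{int}\,D) \le \liminf_i \rho_{n_i}(D)\bar\mu(D)$ and $\hat\mu(\overline D) \ge \limsup_i \rho_{n_i}(D)\bar\mu(D)$; combined with $\bar\mu(\partial D \cap D) $-type considerations (the mass on the topological boundary), which is exactly controlled by the no-escape-to-the-boundary part of liftability, this forces $\rho_{n_i}(D)$ to converge. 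I would state this cleanly: since $\hat\mu$ is a non-zero vague limit, no mass accumulates on the boundaries of the levels, hence $\hat\mu(D) = \lim_i \rho_{n_i}(D)\,\bar\mu(D)$, giving a genuine finite limit $\rho(D) = \hat\mu(D)/\bar\mu(D)$.

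The main obstacle I anticipate is precisely this interchange of the vague limit with the (possibly infinitely many) levels $D$ and the handling of boundary mass: one must be careful that $\bar\mu$ can be infinite (only $\sigma$-finite), so the identity $\frac{d\hat\mu}{d\bar\mu}\big|_D = \hat\mu(D)/\bar\mu(D)$ needs $\bar\mu(D) < \infty$, which does hold because $D \subseteq Z_{j}$ for some $j$ and $\bar\mu(D) = \mu(\pi(D)) \le \mu(Z_j) \le 1$. Once finiteness of each $\bar\mu(D)$ is in hand, everything localizes to a single compact level and the argument is routine. I would therefore structure the proof as: (1) $\bar\mu(D) \in (0,1]$; (2) vague convergence plus absence of boundary mass gives $\rho_{n_i}(D) \to \rho(D) := \hat\mu(D)/\bar\mu(D)$ for each $D$; (3) on each $D$, $\hat\mu$ and $\bar\mu$ agree up to the factor $\rho(D)$ on a generating algebra, hence $\hat\mu|_D = \rho(D)\,\bar\mu|_D$; (4) sum over $D$ to conclude $\hat\mu \ll \bar\mu$ with the claimed piecewise-constant density.

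\begin{proof}
Since every $D \in \mathcal D$ satisfies $D \subseteq Z_j$ for some $1 \le j \le N$, we have $0 < \bar\mu(D) = \mu(\pi(D)) \le \mu(Z_j) \le 1$, so $\bar\mu(D) \in (0,1]$ is finite. Let $\hat\mu$ be a non-zero vague limit point, so $\hat\mu = \lim_i \hat\mu_{n_i}$ in the weak topology on compacta along some subsequence $(n_i)$. Fix $D \in \mathcal D$; since $D$ (more precisely $\overline D$) is compact, $\hat\mu_{n_i}|_D \to \hat\mu|_D$ weakly. By Lemma~\ref{l:mun}, $\hat\mu_{n_i}|_D = \rho_{n_i}(D)\,\bar\mu|_D$ with $\rho_{n_i}(D)$ a constant. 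Because $\hat\mu$ is a non-zero limit point, no mass escapes to the boundaries of the levels of the tower, so $\bar\mu$-null sets carry no $\hat\mu$-mass in the limit and $\hat\mu(D) = \lim_i \rho_{n_i}(D)\,\bar\mu(D)$; as $\bar\mu(D) \in (0,1]$ this yields a finite limit
\[
\rho(D) := \lim_{i\to\infty} \rho_{n_i}(D) = \frac{\hat\mu(D)}{\bar\mu(D)}.
\]
For any measurable $A \subseteq D$, weak convergence gives $\hat\mu(A) = \lim_i \rho_{n_i}(D)\,\bar\mu(A) = \rho(D)\,\bar\mu(A)$, so $\hat\mu|_D = \rho(D)\,\bar\mu|_D$. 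Since $\hat{\mathcal B} = \mathcal D \vee \pi^{-1}(\mathcal B)$, every $A \in \hat{\mathcal B}$ decomposes as $A = \sqcup_{D \in \mathcal D}(A \cap D)$, and summing,
\[
\hat\mu(A) = \sum_{D \in \mathcal D} \hat\mu(A \cap D) = \sum_{D \in \mathcal D} \rho(D)\,\bar\mu(A \cap D) = \int_A \rho \, d\bar\mu,
\]
where $\rho$ is the function equal to $\rho(D)$ on each $D$. In particular $\bar\mu(A) = 0$ implies $\hat\mu(A) = 0$, so $\hat\mu \ll \bar\mu$, and $\frac{d\hat\mu}{d\bar\mu}$ is constant on each $D$ with value $\rho(D) = \hat\mu(D)/\bar\mu(D)$.
\end{proof}
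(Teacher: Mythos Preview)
Your approach is essentially the same as the paper's: fix a level $D$, use Lemma~\ref{l:mun} to write $\hat\mu_{n_i}|_D = \rho_{n_i}(D)\,\bar\mu|_D$, use vague convergence on the compact level to extract the limit $\rho(D)$, and read off $\rho(D)=\hat\mu(D)/\bar\mu(D)$. You add details the paper leaves implicit (finiteness of $\bar\mu(D)$, the summation over $D\in\mathcal D$ to get global absolute continuity), which is fine. One small misattribution: ``non-zero limit point'' concerns no escape of mass to infinity, not boundary mass; the clean way to get $\rho_{n_i}(D)\to\rho(D)$ without invoking boundaries is to test against a continuous $f$ supported in $\mathrm{int}\,D$ with $\int f\,d\bar\mu>0$, which immediately gives convergence of the scalars and hence total-variation convergence of $\hat\mu_{n_i}|_D$ to $\rho(D)\,\bar\mu|_D$, forcing $\hat\mu|_D=\rho(D)\,\bar\mu|_D$ by uniqueness of weak limits.
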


\begin{remark}{\rm Note that $\hat \mu \ll \bar \mu$ implies that $\hat \mu \circ \pi^{-1} \ll \mu$. The previous proposition doesn't use $T$-invariance of $\mu$. Since $\hat \mu$ is $\hat T$-invariant even if $\mu$ is not $T$-invariant, $ \hat \mu \circ  \pi^{-1} \ll \mu$ is $T$-invariant and in the sequel we produce a natural extension of $(X,{\mathcal B}, \hat \mu \circ  \pi^{-1} , T)$.}
\end{remark}

\begin{proof}
Fix $D \in \mathcal D$ and $A \subset D$ compact. By Lemma~\ref{l:mun}, $\rho_n(D)$ is constant, and since $\hat \mu$ is a vague limit point of the sequence $\{ \hat\mu_n\}_{n \ge 1}$ along some subsequence $\{n_k\}_{k \ge 1}$, $\{ \hat\mu_{n_k}|_A\}_{k \ge 1}$ converges to $\hat \mu|_A$ in the 
weak topology as $k \to \infty$. This means that $\rho_{n_k}(D) = \frac{\hat \mu_{n_k}(A)}{\bar \mu(A)}$ converges to a constant limit density $\rho(D)$.
Clearly $\hat\mu(D) = \rho(D) \bar\mu(D)$, so $\rho(D):=\frac{d\hat \mu}{d \bar \mu}\big|_D = \frac{\hat \mu(D)}{\bar \mu(D)}$ follows. 
\end{proof}

\subsection{The natural extension}\label{ss:natural_extension}
From the Hofbauer tower we will obtain a version of the {\em natural extension} of the transformation $T$. We start from a system $(X, \mathcal B, \mu , T)$ satisfying (c1)-(c6) and {\bf we assume that the measure $\mu$ is liftable}, either by satisfying the requirements of Theorem~\ref{thm:Keller3} if $\mu$ is $T$-invariant or by other means (such as Proposition~\ref{prop:noninvariant}). Let us first give a formal definition of the natural extension. 

\begin{definition}\label{def:naturalextension}
A measure theoretical dynamical system $(Y, \mathcal C, \nu, F)$ is a {\em natural extension} of the non-invertible system $(X, \mathcal B, \mu, T)$ if all the following are satisfied. There are sets $X^* \in \mathcal B$ and $Y^* \in \mathcal C$, with 
$\mu(X^*)=1=\nu(Y^*)$ and $T (X^*) \subset X^*$ and $F (Y^*) \subseteq Y^*$ and there is a map 
$\phi: X^* \to Y^*$ such that
\begin{itemize}
\item[(ne1)] $F$ is invertible $\nu$-a.e.;
\item[(ne2)] $\phi$ is bi-measurable and surjective;
\item[(ne3)] $\phi$ preserves the measure structure, \ie $\mu = \nu \circ \phi^{-1}$;
\item[(ne4)] $\phi$ preserves the dynamics, \ie $\phi \circ T = F \circ \phi$;
\item[(ne5)]  $\mathcal C$ is the coarsest $\sigma$-algebra that makes
(ne1)-(ne4) valid, \ie \\
 $\bigvee_{n \geq 0} F^{-n}(\phi^{-1}{\mathcal B}) = \mathcal C$.
\end{itemize}
If a map $\phi$ satisfies (ne2), (ne3), (ne4) and is injective, then the systems $(Y, \mathcal C, \nu, F)$ and $(X, \mathcal B, \mu, T)$ are called {\em isomorphic} and $\phi$ is an {\em isomorphism}.
\end{definition}
\vskip .2cm
For the natural extension only the components $D_u$ of positive $\hat \mu$-measure are important, but w.l.o.g.~we can assume that $\hat \mu(D_u) > 0$ for all $u \in \alpha$. To define the natural extension domain $Y$, extend each $D_u$ by one dimension to a set $R_u = D_u \times [0, \rho(D_u)]$, which we will call the {\em rug} of $D_u$. Set $Y := \bigsqcup_{u \in \alpha} R_u$ and let $\mathcal C$ denote the Borel $\sigma$-algebra on $Y$. Use $\nu$ to denote the product measure on $(Y, \mathcal C)$ given by $\bar \mu \times m$ on each rug, where $m$ is the one-dimensional Lebesgue measure. Then by Proposition~\ref{p:rhoD}
\[ \nu (Y) =\sum_{u \in \alpha} \bar \mu(D_u)\rho(D_u) = \sum_{u \in \alpha} \hat \mu (D_u)=1. \]
Let $\hat \pi:Y \to \hat X$ be the projection onto the first coordinate. Then $\nu \circ \hat \pi^{-1}=\hat \mu$. We will extend the action of $\hat T$ to the vertical direction, obtaining a new map which we call $F: Y \to Y$. This is done piecewise as follows. For $z = (\hat x, y, u) \in R_u$ with $\pi(\hat x) \in Z_j$ and $\hat T \hat x \in D_v$, define
\begin{equation}\label{q:natexmap}
F z= F(\hat x, y, u) = \left( \hat T \hat x\ ,\ \frac{y}{s_j} + 
\sum_{1 \le k \le N} \sum_{\stackrel{ t<u :}{\pi^{-1}(Z_k) \supset D_t \to D_v}} \frac{ \rho(D_t)}{s_k}\ ,\ v \right).
\end{equation}
In words, the parts of all the rugs $R_t$ that map to $R_v$ are squeezed in the vertical direction by a factor equal to the expansion in the `horizontal' direction and are stacked on top of each other into the rug $R_v$ according to the order relation on $\mathcal D$, see Figure~\ref{f:natexex}. Hence, the image strips in $R_v$ are disjoint. By Proposition~\ref{p:rhoD} the map $F$ is well-defined on a full measure subset of $Y$. Since the stretch in the horizontal direction and the squeeze in vertical direction are the same, $F$ preserves area $\nu$. The next lemma gives (ne1).
\begin{figure}[ht]
\centering
\includegraphics{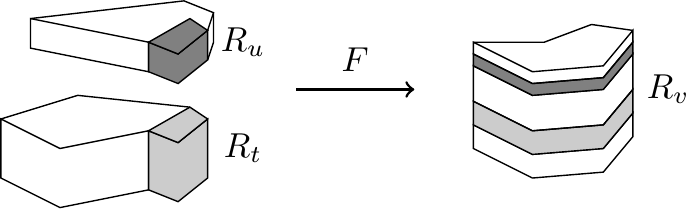}
\caption{$F$ maps the coloured regions in $R_t$ and $R_u$ both to $R_v$, \ie there is a set $Z_j$, such that $TD_t \cap Z_j = TD_u \cap Z_j=D_v$ . If $t < u$, then the image of $R_t$ in $R_v$ lies below the image of $R_u$.  Also, the image of $R_t$ and $R_u$ stretch all the way across $R_v$ in the ``horizontal'' direction.}
\label{f:natexex}
\end{figure}

\begin{lemma}\label{l:invertible}
The map $F$ is invertible  $\nu$-a.e.
\end{lemma}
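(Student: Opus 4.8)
The plan is to show that $F$ has a well-defined measurable inverse on a set of full $\nu$-measure by exploiting the fact that, on each rug $R_v$, the images $F(R_t \cap F^{-1}R_v)$ tile $R_v$ into disjoint horizontal strips. First I would fix a target rug $R_v$ and enumerate all "incoming" data: pairs $(t, k)$ with $t \in \alpha$, $1 \le k \le N$, $\pi^{-1}(Z_k) \supset D_t$, and $D_t \to D_v$. For each such pair, the portion of $R_t$ that maps into $R_v$ is $(D_t \cap \pi^{-1}(Z_k)) \times [0, \rho(D_t)]$, and by \eqref{q:natexmap} its image is the strip $\hat T(D_t \cap \pi^{-1}(Z_k)) \times [c_{t,k}, c_{t,k} + \rho(D_t)/s_k]$ in $R_v$, where $c_{t,k}$ is the constant offset appearing in \eqref{q:natexmap}. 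Because $\mathcal D$ is a Markov partition of $(\hat X, \hat T)$ and the horizontal stretch is exactly $s_k$, the horizontal factor $\hat T(D_t \cap \pi^{-1}(Z_k))$ equals all of $D_v$ up to a $\bar\mu$-null set; so each strip spans $R_v$ horizontally.

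The next step is to check that these strips are pairwise disjoint and exhaust $R_v$ up to a $\nu$-null set. Disjointness in the vertical coordinate is forced by the stacking rule: the offsets $c_{t,k}$ are partial sums of the heights $\rho(D_t)/s_k$ taken in the order inherited from the ordering of $\alpha$ (and of the partition index $k$), so consecutive strips are placed one directly above the previous with no overlap. For exhaustion, I would compute the total height occupied inside $R_v$: summing $\rho(D_t)/s_k$ over all incoming pairs $(t,k)$ gives $\sum_{k, t} \rho(D_t)/s_k = \sum_{D_t \to D_v} \rho(D_t) \cdot \mu(\pi(D_t) \cap Z_k)/\mu(\pi(D_t))$ reorganised; using (c4) and the fact that $D_t \cap \pi^{-1}(Z_k)$ maps onto $D_v$, this telescopes to $\frac{1}{\bar\mu(D_v)} \sum_{D_t \to D_v} \bar\mu(D_t \cap \pi^{-1}(Z_k)) \cdot \frac{\rho(D_t)\bar\mu(D_v)}{\bar\mu(D_t \cap \pi^{-1}(Z_k)) s_k}$, which by the Jacobian relation $\bar\mu(\hat T A) = s_k \bar\mu(A)$ equals $\rho(D_v) \cdot \frac{1}{\bar\mu(D_v)}\sum \bar\mu(D_t \cap \pi^{-1}(Z_k))$. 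Since the sets $D_t \cap \pi^{-1}(Z_k)$ with $\hat T$-image $D_v$ partition $\hat T^{-1}D_v$, and $\hat\mu$ is $\hat T$-invariant, the sum of their $\bar\mu$-measures is comparable to $\bar\mu(D_v)$ weighted correctly — the key identity being exactly the piecewise-constant density relation $\rho(D_v) = \hat\mu(D_v)/\bar\mu(D_v)$ from Proposition~\ref{p:rhoD} combined with the Markov balance $\sum_{u: D_u \to D_v}\hat\mu(D_u \cap \hat T^{-1}D_v) = \hat\mu(D_v)$. This shows the strips fill $R_v$ to height $\rho(D_v)$, i.e.\ all of $R_v$ up to a $\nu$-null set.

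Once disjointness and exhaustion are established, the inverse is defined rug-by-rug and strip-by-strip: given $z' = (\hat x', y', v) \in R_v$ in the interior of the strip coming from pair $(t,k)$, there is a unique $\hat x$ with $\hat T \hat x = \hat x'$, $\hat x \in D_t$, $\pi(\hat x) \in Z_k$ (uniqueness of the $\hat T$-preimage within a given level $D_t$ and partition element $Z_k$ follows from (c1), injectivity of $T_{Z_k}$), and then $y = s_k(y' - c_{t,k}) \in [0, \rho(D_t)]$ is uniquely determined. This $F^{-1}$ is affine on each strip, hence Borel measurable, and $F \circ F^{-1} = \mathrm{id}$ and $F^{-1}\circ F = \mathrm{id}$ hold off the union of the strip boundaries and the $\bar\mu$-null exceptional sets from the "up to null set" statements above — a countable union of $\nu$-null sets, hence $\nu$-null.

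The main obstacle is the bookkeeping in the second step: verifying that the strips exactly fill each rug requires carefully matching the offsets $c_{t,k}$ and heights against the Markov balance equation for $\hat\mu$ together with the Jacobian identity (c4), and one must be careful that the "horizontal spanning" claim $\hat T(D_t \cap \pi^{-1}(Z_k)) = D_v$ holds modulo $\bar\mu$-null sets (not on the nose), which is where condition (c6) and the null-set $\pi(\cup_u B_u)$ from the proof of Proposition~\ref{p:kel2} enter to control boundaries. Everything else — measurability of $F^{-1}$, its piecewise-affine form, the a.e.\ identities — is routine once the tiling picture is nailed down.
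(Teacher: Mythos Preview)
Your proposal is correct and follows essentially the same approach as the paper: both establish that the images $F(R_t \cap F^{-1}R_v)$ tile $R_v$ into pairwise disjoint horizontal strips whose heights $\rho(D_t)/s_k$ sum to $\rho(D_v)$, the key identity coming from the $\hat T$-invariance of $\hat\mu$ combined with $\rho(D) = \hat\mu(D)/\bar\mu(D)$ from Proposition~\ref{p:rhoD}. Your exhaustion computation in the second paragraph is written more circuitously than necessary (the paper derives $\rho(D_v) = \sum_{t,k}\rho(D_t)/s_k$ in two clean lines from $\hat\mu(D_v) = \hat\mu(\hat T^{-1}D_v)$), and your invocation of (c6) for the horizontal spanning is not needed here since $\hat T(D_t) \supset D_v$ whenever $D_t \to D_v$ is built into the Markov structure of $\mathcal D$.
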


\begin{proof}
To show that $F$ is surjective, first note that
\begin{eqnarray}\label{q:hatmudu}
\hat \mu (D_u) &=& \hat \mu (\hat T^{-1}D_u) = \sum_{t \in \alpha} \int_{\hat X} \frac{d\hat \mu}{d\bar \mu}\, 1_{\hat T^{-1}D_u \cap D_t} d\bar \mu \nonumber \\
 &=& \sum_{t \in \alpha} \rho(D_t) \, \bar \mu \big( D_t \cap \hat T^{-1} D_u \big) .
\end{eqnarray}
Thus,
\begin{eqnarray*}
\rho(D_u) &=& \frac{1}{\bar \mu(D_u)} \sum_{t \in \alpha} \rho(D_t) \, \bar \mu( D_t \cap \hat T^{-1} D_u )\\
&=& \frac{1}{\bar \mu(D_u)} \sum_{1 \le j \le N} \sum_{\stackrel{ t\in \alpha :}{\pi^{-1}(Z_j) \supset D_t \to D_u}} \rho(D_t) \frac{\bar \mu(D_u)}{s_j} = \sum_{t \in \alpha}  \sum_{ \stackrel{ 1 \le j  \le N :}{\pi^{-1}Z_j \supset D_t \to D_u} } \frac{\rho(D_t)}{s_j}.
\end{eqnarray*}
This shows that every $(\hat x,y) \in R_u$ is the images of something; the $\hat x$-coordinate because $T D = D_u$ if $D \to D_u$, and the $y$-coordinate because only those $D_t$ with $D_t \to D_u$ contribute to the strips of the rug $R_u$.

For the injectivity of $F$, first note that for the horizontal boundary of each rug $R_u$ we have $\nu \big( D_u \times \{0,\rho(D_u)\} \big) = \bar \mu(D_u) \cdot 0=0$. Let $M$ be the union of all these boundaries, \ie $M = \cup_{u \in \alpha} \big( D_u \times \{ 0,\rho(D_u)\} \big)$. Then $\nu \big( \cup_{n \in \Z} F^n M \big)=0$. Assume that $(\hat x_1, y_1, t), ( \hat x_2, y_2,u) \in Y \setminus M$ 
are such that
\[ F(\hat x_1, y_1, t)= F( \hat x_2, y_2,u) = (\hat x, y , v).\]
Then, by the injectivity of $T$ on each of the elements of $\mathcal Z = \{ Z_j \}_{j=1}^N$, if $ \hat x_1 \neq \hat x_2$ with $ \hat x_1 \in D_t \subset \pi^{-1} (Z_j)$ and $ \hat x_2 \in D_u \subset \pi^{-1}(Z_k)$, then either $t \neq u$ or $j \neq k$. By the definition of $F$, either one of these inequalities implies that the second coordinates of $F(\hat x_1, y_1, t)$ and $F(\hat x_2, y_2,u)$ cannot be equal. Hence $t=u$ and $\hat x_1 = \hat x_2$. Since $F$ stacks the rugs on top of each other according to the ordering on $\mathcal D$, this implies that also $y_1=y_2$. Hence, $(\hat x_1, y_1, t) = (\hat x_2, y_2,u) $ and $F$ is invertible.
\end{proof}

\vskip .3cm
Note that $\hat T \circ \hat \pi = \hat \pi \circ F$, where as before, $\hat \pi:Y \to \hat X$ is the projection onto the first coordinate. Also, $\nu(\hat \pi^{-1}(B)) = \hat \mu(B)$ for all $B \in \hat{\mathcal B}$. Let $\phi:= \pi \circ \hat \pi: Y \to X$, see Figure~\ref{Fig:diagram}. Since also $T \circ \pi = \pi \circ \hat T$ and $\hat \mu \circ \pi^{-1}=\mu$, $\phi$ satisfies (ne2), (ne3) and (ne4).
\begin{figure}[ht]
\centering
\includegraphics{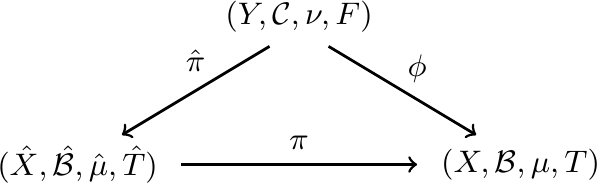}
\caption{The three spaces involved and the projections between them.}
\label{Fig:diagram}
\end{figure}

It remains to show (ne5). Recall the definition of the sets $\partial_n D_u$ from (\ref{q:partialdn}) and the sets $B_{u,n}$ from (\ref{q:bun}). Let $B_n = \bigsqcup_{u \in \alpha}B_{u,n}$ be the disjoint union of these sets over all $u$. Then $B_{n+1} \subseteq B_n$ and by (c6) and Proposition~\ref{p:rhoD}, $\lim_{n \to \infty} \hat \mu(B_n) =0$. We extend the sets $B_{u,n}$ to $Y$ by defining
\[ E_{u,n} = \bigcup_{Z \in \mathcal Z_n} \big\{ (Z \cap D_u)\times [0, \rho(D_u)] \, : \, 0< \bar \mu (Z \cap D_u) < \mu (Z) \big\} \subseteq Y.\]
Also, let $E_n = \bigsqcup_{u \in \alpha}E_{u,n}$. Since $\hat \mu=\nu \circ \hat \pi^{-1}$, we have
\[ 
0 \le \nu(\bigcap_{n \ge 0} E_n) = \lim_{n \to \infty} \nu(E_n) \le \lim_{n \to \infty} (\nu \circ \hat \pi^{-1})(B_n) = \lim_{n \to \infty} \hat \mu(B_n)=0.\]
We need the following lemma.

\begin{lemma}\label{l:bn}
If $z \in E_{n+1}$, then $F z \in E_n$ and hence $F^n E_{n+1} \subseteq F^{n-1}E_n$.
\end{lemma}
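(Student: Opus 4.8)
The plan is to reduce the statement to a ``horizontal'' claim about the Hofbauer map, namely $\hat T(B_{n+1})\subseteq B_n$, and then read off the statement for $F$ on $Y$. First observe that for a cylinder $Z$ one has $\bar\mu(Z\cap D_u)=\mu(Z\cap D_u)$ by the definition \eqref{eq:mu_bar} of $\bar\mu$, so the condition $0<\bar\mu(Z\cap D_u)<\mu(Z)$ appearing in $E_{u,n}$ is exactly $Z\in\partial_n D_u$; hence $E_{u,n}=B_{u,n}\times[0,\rho(D_u)]$ is the full rug over $B_{u,n}$. In particular $z=(\hat x,y,u)\in E_{n+1}$ if and only if $\hat x\in B_{u,n+1}$, with the vertical coordinate playing no role, and since $E_n$ contains the whole rug over each $B_{v,n}$, it suffices to show that the first coordinate $\hat T\hat x$ of $Fz$ lies in $B_n$.

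So the core step is: if $\hat x=(x,D_u)\in B_{u,n+1}$ and $\hat T\hat x=(Tx,D_v)$, then $Tx\in B_{v,n}$. By definition of $B_{u,n+1}$ there is an $(n+2)$-cylinder $Z=Z_{j_0\cdots j_{n+1}}$ with $x\in Z\cap D_u$ and $0<\mu(Z\cap D_u)<\mu(Z)$. Every level is contained in a single partition element, say $D_u\subseteq Z_i$, and since $\mu(Z\cap D_u)>0$ while distinct partition elements are $\mu$-disjoint, necessarily $j_0=i$; thus $Z\subseteq Z_{j_0}$, a set on which the branch $T_{Z_{j_0}}$ from (c1) is injective, and $D_v=Z_{j_1}\cap TD_u$ by the definition of $\hat T$. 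Put $Z'':=Z_{j_1\cdots j_{n+1}}\in\mathcal Z_n$. Then $Tx\in Z''$, and injectivity of $T$ on $Z_{j_0}$ gives $T(Z\cap D_u)\subseteq Z''\cap D_v$ (it lands in $Z''\subseteq Z_{j_1}$ and in $TD_u$) and $T(Z\setminus D_u)\subseteq Z''\setminus D_v$ (because $(Z\setminus D_u)\cap D_u=\emptyset$ forces $T(Z\setminus D_u)\cap TD_u=\emptyset$, while $D_v\subseteq TD_u$). By (c4), $\mu(Z''\cap D_v)\ge s_{j_0}\,\mu(Z\cap D_u)>0$ and $\mu(Z''\setminus D_v)\ge s_{j_0}\,\mu(Z\setminus D_u)>0$, so $Z''\in\partial_n D_v$, whence $Tx\in Z''\cap D_v\subseteq B_{v,n}$.

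Combining the two observations, $z\in E_{n+1}$ implies $Fz\in E_{v,n}\subseteq E_n$, which is the first assertion. The second follows by applying $F^{n-1}$ to the inclusion $F(E_{n+1})\subseteq E_n$: indeed $F^n E_{n+1}=F^{n-1}(FE_{n+1})\subseteq F^{n-1}E_n$.

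I expect the main obstacle to be the bookkeeping in the core step: correctly pinning down the first symbol $j_0$ of the offending cylinder $Z$ (so that the injective branch $T_{Z_{j_0}}$ can be used), and then exploiting that injectivity together with the zero-distortion hypothesis (c4) to guarantee that \emph{both} $Z\cap D_u$ and $Z\setminus D_u$ have images of positive $\mu$-measure inside $Z''$, landing respectively in $Z''\cap D_v$ and $Z''\setminus D_v$. The rest is routine set manipulation, and the null-set conventions already in force on $\hat X$ and $Y$ take care of boundary overlaps.
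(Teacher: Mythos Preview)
Your proof is correct and follows essentially the same route as the paper: both reduce to pushing an offending $(n+2)$-cylinder $Z_{j_0\cdots j_{n+1}}$ forward under the branch $T|_{Z_{j_0}}$ and checking, via (c4), that the image $(n+1)$-cylinder $Z_{j_1\cdots j_{n+1}}$ still satisfies the strict inequalities defining $\partial_n D_v$. Your version is slightly more explicit in separating the two inequalities (using injectivity to track $Z\cap D_u$ and $Z\setminus D_u$ individually), whereas the paper compresses this into a single chain $0<\bar\mu(\pi^{-1}(Z_{j_1\cdots j_{n+1}})\cap D_v)=s_{j_0}\bar\mu(\pi^{-1}(Z_{j_0\cdots j_{n+1}})\cap D_u)<s_{j_0}\mu(Z_{j_0\cdots j_{n+1}})\le\mu(Z_{j_1\cdots j_{n+1}})$, but the substance is identical.
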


\begin{proof}
Let $z \in E_{n+1}$ and suppose $\hat \pi(z) \in Z_{j_0 \cdots j_n} \cap D_u$ for some set $Z_{j_0 \cdots j_n} \in \mathcal Z_{n+1}$ and $u \in \alpha$. Then there is a $v \in\alpha$, such that $\hat T D_u = D_v\subset \pi^{-1}(Z_{j_1})$ and $\hat \pi(Fz)\in \hat T \big(\pi^{-1}(Z_{j_0 \cdots j_n})\cap D_u \big) = \pi^{-1}(Z_{j_1 \cdots j_n})\cap D_v$. Then,
\begin{eqnarray*}
0 < \bar \mu \big(\pi^{-1}(Z_{j_1 \cdots j_n})\cap D_v \big) &=& s_{j_0} \bar \mu \big(\pi^{-1}(Z_{j_0 \cdots j_n})\cap D_u\big) \\
&<&  s_{j_0} \mu( Z_{j_0 \cdots j_n}) \le \mu(Z_{j_1 \cdots j_n}),
\end{eqnarray*}
which gives that $Fz \in E_n$. 
\end{proof}

\vskip .3cm
For points $z \in E_n$, the first $n$ iterates $F^k z$ lie close to the boundary of their rugs. We will prove (ne5) by showing that the map $F$ separates points. In order to make this work, we need to exclude points of which all inverse images lie close to the boundary.

\begin{lemma}\label{l:sigmaalgebra}
Under condition (c6) the $\sigma$-algebra $\bigvee_{n \geq 0} F^n (\phi^{-1}{\mathcal B})$ is equal, up to a set of $\nu$-measure zero, to the $\sigma$-algebra $\mathcal C$ of Lebesgue measurable sets on $Y$.
\end{lemma}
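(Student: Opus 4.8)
The plan is to show the two inclusions. The easy direction is $\bigvee_{n\ge 0}F^n(\phi^{-1}\mathcal B)\subseteq \mathcal C$, which is immediate since $\phi$ is measurable and $F$ is bi-measurable (indeed $F$ is invertible $\nu$-a.e.\ by Lemma~\ref{l:invertible}), so each $F^n(\phi^{-1}\mathcal B)$ already sits inside the Borel/Lebesgue $\sigma$-algebra $\mathcal C$. The substance is the reverse inclusion: up to $\nu$-null sets, $\mathcal C$ is generated by the images under $F^n$ of $\phi^{-1}\mathcal B$.

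For this, recall that $\mathcal C$ is generated by sets of the form $A\times J$ with $A\subseteq D_u$ a Lebesgue set in a single level and $J\subseteq[0,\rho(D_u)]$ an interval; so it suffices to separate points of $Y\setminus\mathcal N$ for a suitable $\nu$-null set $\mathcal N$ using functions measurable with respect to $\bigvee_{n\ge 0}F^n(\phi^{-1}\mathcal B)$. Take $\mathcal N = \bigcap_{n\ge 0}\big(\bigcap_{m\ge 0}F^m E_{n+m}\big)\ \cup\ \bigcup_{n\in\Z}F^n M$, where $M=\cup_u(D_u\times\{0,\rho(D_u)\})$ is the (null) horizontal boundary from the proof of Lemma~\ref{l:invertible}; by Lemma~\ref{l:bn} the nested intersections $F^m E_{n+m}$ live inside $F^{m-1}E_{n+m-1}$, and since $\nu(\bigcap_n E_n)=0$ while $F$ preserves $\nu$, one gets $\nu(\mathcal N)=0$. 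Now fix $z_1=(\hat x_1,y_1,t), z_2=(\hat x_2,y_2,u)\in Y\setminus\mathcal N$ with $z_1\ne z_2$; I must produce an $n$ and a set $W\in F^n(\phi^{-1}\mathcal B)$ containing exactly one of them. Since $z_1\notin\mathcal N$ there is $n$ such that the backward orbit leaves $E_\bullet$, i.e.\ $F^{-n}z_1$ (which exists off $M$) lies in some $(Z\cap D_w)\times[0,\rho(D_w)]$ with $\bar\mu(Z\cap D_w)=\mu(Z)$, meaning the cylinder $Z\in\mathcal Z_n$ is entirely inside the level $D_w$. The point is that once a point sits over a full cylinder $Z$ inside its level, the next $\hat T$-iterates are governed purely by the symbolic itinerary in $\mathcal Z$: applying $\hat T^n$ maps $\pi^{-1}(Z)\cap D_w$ bijectively onto some level $D$, and on $Y$ the map $F^n$ maps the corresponding rug-piece affinely (linear in the vertical coordinate) onto a horizontal strip $F^n((Z\cap D_w)\times[0,\rho(D_w)])$. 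Pulling $\phi^{-1}$ of the cylinders $Z_{i_0\cdots i_n}\in\mathcal Z_n$ and pushing forward by $F^n$, together with the fact that $F$ also refines the vertical coordinate (each application of $F$ subdivides a rug into the disjoint stacked images of the incoming rugs, indexed by the admissible letters of $\mathcal D$), we recover arbitrarily fine partitions of $Y\setminus\mathcal N$ into such strips in both the horizontal ($x$ together with the level label) and vertical directions. Concretely: the horizontal coordinate and level are separated because $\phi^{-1}$ already generates $\pi^{-1}(\mathcal B)$ on $\hat X$ and the levels are among the generators, while the vertical coordinate is separated because $\bigvee_{k\ge0}F^{-k}$ of the level-partition $\{R_u\}$ cuts each rug into countably many horizontal strips whose heights shrink to $0$ (their heights are products $\rho(D_w)/(s_{j_0}\cdots s_{j_{k-1}})$, and by (c4) the $s_j\ge 1$ with at least expansion occurring, forcing the mesh to $0$ along $\nu$-a.e.\ vertical fibre, outside $\mathcal N$). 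Hence any two points of $Y\setminus\mathcal N$ are separated by a set in $\bigvee_{n\ge 0}F^n(\phi^{-1}\mathcal B)$, which therefore contains all of $\mathcal C$ modulo $\nu$-null sets.

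The main obstacle, and the step I would write most carefully, is the control of the \emph{vertical} coordinate: one must check that for $\nu$-a.e.\ $z$ the backward images $F^{-k}z$ eventually reach a rug over a full cylinder (this is exactly where $z\notin\mathcal N$, hence (c6) via $\lim_n\hat\mu(B_n)=0$, is used), and then that iterating $F$ forward subdivides the vertical fibre into strips of diameter tending to $0$. The height of the strip in $R_v$ coming from $R_t$ along symbol $Z_j$ is $\rho(D_t)/s_j$, and after $k$ steps along an itinerary the heights are $\rho(D_w)/(s_{j_0}\cdots s_{j_{k-1}})$; one needs that these do not stay bounded below, which follows because along $\nu$-typical orbits the Jacobian product grows (the system is non-atomic with positive entropy / genuine expansion on a positive-measure set under (c1)--(c6)), or more directly because $\{R_u\}$ generates $\hat{\mathcal B}$ under $\bigvee_k\hat T^{-k}$ so $\{R_u\}$ together with $\phi^{-1}\mathcal B$ must separate vertical fibres as well. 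I would phrase the final argument as: $\bigvee_{n\ge 0}F^n(\phi^{-1}\mathcal B)\supseteq \bigvee_{n\ge 0}F^n\hat\pi^{-1}(\hat{\mathcal B})$ is not quite automatic — rather, $F^n\hat\pi^{-1}(\hat{\mathcal B})$ is generated by horizontal strips, and letting $n\to\infty$ while also intersecting with the vertical-fibre partition produced by the inverse branches of $F$ (valid off $\mathcal N$) yields the product $\sigma$-algebra $\mathcal C$. This completes the verification of (ne5), and with Lemmas~\ref{l:invertible} and the remarks establishing (ne2)--(ne4) already in place, it shows $(Y,\mathcal C,\nu,F)$ is a natural extension of $(X,\mathcal B,\hat\mu\circ\pi^{-1},T)$.
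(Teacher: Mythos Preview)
Your overall strategy---reduce to separating points outside a $\nu$-null exceptional set---matches the paper's, and your use of Lemma~\ref{l:bn} to build the null set is correct. But two steps in your separation argument do not go through as written.

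First, you assert that ``the horizontal coordinate and level are separated because $\phi^{-1}$ already generates $\pi^{-1}(\mathcal B)$ on $\hat X$ and the levels are among the generators.'' The levels $D_u$ are \emph{not} in $\pi^{-1}\mathcal B$: distinct levels can have identical $\pi$-projection, so $\phi^{-1}\mathcal B=\hat\pi^{-1}\pi^{-1}\mathcal B$ cannot distinguish them. Showing that the rug partition $\{R_u\}$ lies in $\bigvee_{n\ge 0}F^n\phi^{-1}\mathcal B$ is precisely the nontrivial content of the lemma, so invoking it here is circular. Second, your vertical argument claims the strip heights $\rho(D_w)/(s_{j_0}\cdots s_{j_{k-1}})$ tend to zero ``by (c4) the $s_j\ge 1$ with at least expansion occurring''; but (c4) only gives $s_j\ge 1$, and nothing in (c1)--(c6) forces the product to diverge along a typical backward orbit. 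Your fallback (``$\{R_u\}$ generates $\hat{\mathcal B}$ under $\bigvee_k\hat T^{-k}$'') is a statement about the horizontal direction and does not bear on vertical fibres.

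The paper handles both gaps with a single clean reduction. If $z,z'$ lie in the \emph{same} rug with $y\ne y'$, then since $F$ contracts vertically (by factor $1/s_j$), the inverse $F^{-1}$ expands vertical distance, so after finitely many backward steps $F^{-n}z$ and $F^{-n}z'$ land in \emph{different} rugs. If $z,z'$ lie in different rugs but have the same $\phi$-image, one argues (using exactly your full-cylinder observation, applied to \emph{both} points) that the assumption $\phi(F^{-n}z)=\phi(F^{-n}z')$ for all $n$ forces $D_u=D_{u'}$, a contradiction; hence some backward iterate separates the $\phi$-images, and one finishes with disjoint open $B,B'\subset X$. So the missing idea is this two-stage backward reduction: vertical $\to$ different-level $\to$ different $\phi$-projection, rather than trying to produce the level and vertical partitions directly inside $\phi^{-1}\mathcal B$.
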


\begin{proof}
First we define the exceptional set. Let $ E = \bigcap_{n \ge 1} F^{n-1}E_n$. Since $F$ is invertible almost everywhere, $\nu(F^{n-1}E_n)=\nu(E_n)$ for all $n \ge 1$. Lemma~\ref{l:bn} implies that $F^n E_{n+1} \subseteq F^{n-1}E_n$ for all $n \ge 1$. Therefore
\[ \nu( E) = \lim_{n \to \infty} \nu(F^{n-1} E_n) = \lim_{n \to \infty} \nu(E_n)=0.\]
Let $z = (\hat x, y, u)\in R_u \setminus (F E)$ and $z' = (\hat x', y', u') \in R_{u'} \setminus (F E)$ be two different points in $Y$. It suffices to show that there are sets $B$ and $B' \in \mathcal B$ and $n\ge 1$, such that $z \in F^n \phi^{-1}(B)$ and $z' \in F^n \phi^{-1}(B')$ and moreover $F^n \phi^{-1}(B) \cap F^n \phi^{-1}(B') = \emptyset$.

\vskip .2cm

Note that if $x = \phi (z) \neq x' =\phi (z')$, then there are two disjoint open sets $B,B' \subset X$, such that $x \in B$ and $x' \in B'$. Then $\phi^{-1}(B)$ and $\phi^{-1}(B')$ are still disjoint and contain $z$ and $z'$ respectively.

Now suppose that $\phi(z) = \phi(z')$, but $u \neq u'$. We introduce some notation. For $n \ge 1$, write $F^{-n} z \in R_{u_n}$ and use $Z^{(u_n)}$ to denote the $(n+1)$-cylinder in which $\phi(F^{-n}\hat z)$ lies. For $z'$, we use $R_{u_n'}$ and $Z^{(u_n')}$ respectively. Suppose that $\phi (F^{-n}z)=\phi(F^{-n}z')$ for all $n \ge 1$, \ie $Z^{(u_n)}=Z^{(u'_n)}$ for all $n \ge 1$. Since $z, z' \not \in E$, there are $k,k' \ge 1$, such that $z \not \in F^{k-1} E_k$ and $z' \not \in F^{k'-1} E_{k'}$. By Lemma~\ref{l:bn} we have for all $n \ge \max\{k,k'\}$, that $F^{-n+1}z,F^{-n+1}z' \not \in E_n$. This implies that
\[\bar \mu \big(\pi^{-1}(Z^{(u_n)})\cap D_{u_n}\big) = \mu(Z^{(u_n)}) \quad \text{and} \quad \bar \mu \big(\pi^{-1}(Z^{(u_n')})\cap D_{u_n'} \big) = \mu(Z^{(u_n')})\]
and that $Z^{(u_n)}=Z^{(u_n')}$. Thus,
\[ D_u = T^n (Z^{(u_n)}\cap D_{u_n})=T^n (Z^{(u_n')}\cap D_{u_n'})=D_{u'},\]
a contradiction. Hence, there is an $n$ such that $\phi (F^{-n}z)\neq \phi(F^{-n}z')$. This means that we can find two disjoint open sets $B,B' \subset X$, such that $\phi (F^{-n}z) \in B$ and $\phi (F^{-n}z') \in B'$. Then again $F^n\phi^{-1}(B)$ and $F^n\phi^{-1}(B')$ are still disjoint and contain $z$ and $z'$ respectively.

Finally, if $z$ and $z'$ are in the same rug with $ y \neq y'$, then, since $F$ is contracting in the vertical direction, there is an $n$, such that $F^{-n} z$ and $F^{-n}z'$ are in different rugs and we can repeat the argument from above.
\end{proof}

\vskip .3cm
This lemma finishes the proof of the following theorem.
\begin{theorem}
Let $(X, \mathcal B, \mu, T)$ be a system that satisfies conditions (c1)-(c6) and assume $\mu$ lifts to a probability measure $\hat \mu$ on $(\hat X, \hat{\mathcal B})$. Then the system $(Y, \mathcal C, \nu, F)$ is the natural extension of $(X, \mathcal B, \hat \mu \circ \pi^{-1}, T)$ with factor map $\phi = \pi \circ \hat \pi$. In case $\mu$ is $T$-invariant, then $\hat \mu$ is the unique lift and $\hat \mu \circ \pi^{-1}=\mu$.
\end{theorem}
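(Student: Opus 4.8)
The plan is to verify the five conditions (ne1)--(ne5) of Definition~\ref{def:naturalextension} for the map $\phi = \pi \circ \hat\pi : Y \to X$, relying on the structural facts already assembled in Section~\ref{ss:natural_extension}. By Lemma~\ref{l:invertible}, $F$ is invertible $\nu$-a.e., which gives (ne1). The identity $\nu \circ \hat\pi^{-1} = \hat\mu$ (from the product structure of $\nu$ on the rugs, using $\nu(R_u) = \bar\mu(D_u)\rho(D_u) = \hat\mu(D_u)$ via Proposition~\ref{p:rhoD}), together with $\hat\mu \circ \pi^{-1} = \hat\mu\circ\pi^{-1}$ and the fact that $\pi$ and $\hat\pi$ are Borel projections, shows $\phi$ is bi-measurable and surjective (note $\hat T$ is surjective on $\hat X$ because every $D \in \mathcal D$ has an incoming arrow, or one argues directly), giving (ne2) and $\mu := \hat\mu\circ\pi^{-1} = \nu\circ\phi^{-1}$, giving (ne3). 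The semi-conjugacy $\phi \circ T = F \circ \phi$ follows by chaining $\hat T \circ \hat\pi = \hat\pi \circ F$ (immediate from~\eqref{q:natexmap}, which acts as $\hat T$ in the first coordinate) with $T \circ \pi = \pi \circ \hat T$ (established in Section~\ref{ss:Hofbauer}), giving (ne4).

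The substantive point is (ne5): that $\bigvee_{n\ge 0} F^{-n}(\phi^{-1}\mathcal B) = \mathcal C$ up to $\nu$-null sets. Here I would invoke Lemma~\ref{l:sigmaalgebra} directly, but with one caveat to address: that lemma is stated for $\bigvee_{n \ge 0} F^n(\phi^{-1}\mathcal B)$, whereas (ne5) asks about $\bigvee_{n\ge 0} F^{-n}(\phi^{-1}\mathcal B)$. The reconciliation is that since $F$ is invertible $\nu$-a.e., applying $F^{-n}$ is measure-theoretically the inverse operation, and the two tail $\sigma$-algebras $\bigvee_n F^n(\phi^{-1}\mathcal B)$ and $\bigvee_n F^{-n}(\phi^{-1}\mathcal B)$ either agree up to $\nu$-null sets or one should check that the proof of Lemma~\ref{l:sigmaalgebra} (which separates points $z, z'$ by pulling back along $F^{-n}$ and then pushing forward disjoint open sets $B, B'$ by $F^n$) literally produces sets of the form $F^n\phi^{-1}(B)$, and these lie in $\bigvee F^{-n}(\phi^{-1}\mathcal B)$ once one passes through $F$-invertibility. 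In any case, the content is: the exceptional set $E = \bigcap_{n\ge1} F^{n-1}E_n$ is $\nu$-null (by (c6), Proposition~\ref{p:rhoD} and the nesting in Lemma~\ref{l:bn}), and off $E$ the map $F$ separates points, forcing the generated $\sigma$-algebra to be all of $\mathcal C$.

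For the final sentence of the theorem, concerning the case where $\mu$ is $T$-invariant: here I would appeal to Theorem~\ref{thm:Keller3} (or Proposition~\ref{p:kel2}), which asserts under (c1)--(c6) that the Cesaro sequence $\{\hat\mu_n\}$ converges to a \emph{unique} vague limit $\hat\mu$ with $\hat\mu\circ\pi^{-1} = \mu$, provided $\hat\mu\not\equiv 0$ --- and liftability is precisely the hypothesis that this holds. Thus when $\mu$ is $T$-invariant, $\hat\mu$ is the unique lift and $\hat\mu\circ\pi^{-1}=\mu$, so the natural extension constructed is a natural extension of $(X,\mathcal B,\mu,T)$ itself rather than merely of $(X,\mathcal B,\hat\mu\circ\pi^{-1},T)$. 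By Rohlin's uniqueness of natural extensions (cited in the introduction), it then coincides with \emph{the} natural extension.

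I expect the main obstacle to be purely expository rather than mathematical: making sure the direction of the shift in (ne5) matches Lemma~\ref{l:sigmaalgebra}, and confirming that all the measure-zero exceptional sets ($M$ from Lemma~\ref{l:invertible}, $E$ from Lemma~\ref{l:sigmaalgebra}, and the $\mu$-null set hidden in liftability) can be absorbed into the sets $X^\ast \in \mathcal B$, $Y^\ast \in \mathcal C$ with $T(X^\ast)\subseteq X^\ast$, $F(Y^\ast)\subseteq Y^\ast$ demanded by Definition~\ref{def:naturalextension}; this requires taking $Y^\ast = Y \setminus \bigcup_{n\ge 0} F^{-n}(E \cup M \cup \text{(bad set)})$ and its $\phi$-saturation, and checking forward-invariance, which is routine.
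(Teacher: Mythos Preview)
Your proposal is correct and matches the paper's approach exactly: the paper's proof consists solely of the sentence ``This lemma finishes the proof of the following theorem,'' meaning the theorem is assembled from Lemma~\ref{l:invertible} for (ne1), the paragraph following it for (ne2)--(ne4), Lemma~\ref{l:sigmaalgebra} for (ne5), and Proposition~\ref{p:kel2} for the final sentence---precisely the ingredients you cite. Your observation about the $F^n$ versus $F^{-n}$ discrepancy between Lemma~\ref{l:sigmaalgebra} and (ne5) is in fact a genuine typo in the paper's Definition~\ref{def:naturalextension} (note that (ne4) and the direction of $\phi$ are also misstated there); Lemma~\ref{l:sigmaalgebra} has the correct formulation, and your reconciliation via invertibility of $F$ is the right way to resolve it.
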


\begin{corollary}
Let $(X, \mathcal B, \mu, T)$ be a system that satisfies conditions (c1)-(c6). If $\mu$ is $T$-invariant and $h_{\mu}(T) > \text{cap}(T)$, then $\nu$ is ergodic and $h_{\nu}(F)=h_{\mu}(T)$.
\end{corollary}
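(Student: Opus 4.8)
The plan is to deduce both conclusions from the fact, already established, that $(Y,\mathcal C,\nu,F)$ is \emph{the} natural extension of $(X,\mathcal B,\mu,T)$. First I would observe that the hypothesis $h_\mu(T)>\text{cap}(T)$ is exactly what Theorem~\ref{thm:Keller3} requires: it guarantees that $\mu$ is liftable, i.e.\ $\{\hat\mu_n\}$ converges to an ergodic $\hat T$-invariant probability measure $\hat\mu$ with $\hat\mu\circ\pi^{-1}=\mu$. Hence the theorem preceding the corollary applies and produces the natural extension $(Y,\mathcal C,\nu,F)$ with factor map $\phi=\pi\circ\hat\pi:Y\to X$. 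The tools I would then use are: $F$ is $\nu$-a.e.\ invertible (Lemma~\ref{l:invertible}); $\nu\circ\phi^{-1}=\mu$; the intertwining $T\circ\phi=\phi\circ F$ (so that $F^{-k}\phi^{-1}\mathcal Z=\phi^{-1}T^{-k}\mathcal Z$ for all $k\ge0$); and, combining Lemma~\ref{l:sigmaalgebra} with (c3) (which yields $\bigvee_{k\ge0}F^{-k}\phi^{-1}\mathcal Z=\phi^{-1}\mathcal B$),
\[
\bigvee_{m\in\Z}F^m\phi^{-1}\mathcal Z
=\bigvee_{n\ge 0}F^n\Big(\bigvee_{k\ge 0}F^{-k}\phi^{-1}\mathcal Z\Big)
=\bigvee_{n\ge 0}F^n\big(\phi^{-1}\mathcal B\big)=\mathcal C \pmod{\nu},
\]
so that the \emph{finite} partition $\phi^{-1}\mathcal Z$ is a two-sided generator for the invertible system $(Y,\mathcal C,\nu,F)$.

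For ergodicity of $\nu$, I would take $\hat A\in\mathcal C$ with $F^{-1}\hat A=\hat A$ (mod $\nu$); invertibility gives $F^{-k}\hat A=\hat A$ for all $k\in\Z$. Given $\eps>0$, using that $\phi^{-1}\mathcal Z$ is a two-sided generator I would pick $n$ and $A_n\in\bigvee_{|k|\le n}F^k\phi^{-1}\mathcal Z$ with $\nu(\hat A\triangle A_n)<\eps$. Then $F^{-n}A_n\in\bigvee_{k=0}^{2n}F^{-k}\phi^{-1}\mathcal Z=\phi^{-1}\big(\bigvee_{k=0}^{2n}T^{-k}\mathcal Z\big)\subseteq\phi^{-1}\mathcal B$, while $F$-invariance of $\hat A$ and of $\nu$ give $\nu(\hat A\triangle F^{-n}A_n)=\nu(\hat A\triangle A_n)<\eps$. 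Letting $\eps\to0$ and using that $\mathcal C$ is $\nu$-complete, $\hat A$ agrees $\nu$-a.e.\ with $\phi^{-1}(B)$ for some $B\in\mathcal B$. Now $F^{-1}\hat A=\hat A$ translates into $\phi^{-1}(T^{-1}B)=\phi^{-1}(B)$ (mod $\nu$), hence $\mu(T^{-1}B\triangle B)=\nu\big(\phi^{-1}(T^{-1}B\triangle B)\big)=0$; by (c5) this forces $\mu(B)\in\{0,1\}$, so $\nu(\hat A)=\mu(B)\in\{0,1\}$.

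For the entropy, first note $h_\mu(T)\le\log N<\infty$ since $\#\mathcal Z=N$, and $h_\mu(T)=h_\mu(T,\mathcal Z)$ by (c3) and Kolmogorov--Sinai (see \cite{Walters}). Since $\phi^{-1}\mathcal Z$ is a finite two-sided generator for the invertible system $(Y,\mathcal C,\nu,F)$, Kolmogorov--Sinai also gives $h_\nu(F)=h_\nu(F,\phi^{-1}\mathcal Z)$. Finally, for every $n$,
\[
H_\nu\Big(\bigvee_{k=0}^{n-1}F^{-k}\phi^{-1}\mathcal Z\Big)
=H_\nu\Big(\phi^{-1}\bigvee_{k=0}^{n-1}T^{-k}\mathcal Z\Big)
=H_\mu\Big(\bigvee_{k=0}^{n-1}T^{-k}\mathcal Z\Big),
\]
the last equality because $\nu\circ\phi^{-1}=\mu$ sends atoms to atoms of equal measure. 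Dividing by $n$ and letting $n\to\infty$ yields $h_\nu(F,\phi^{-1}\mathcal Z)=h_\mu(T,\mathcal Z)=h_\mu(T)$, hence $h_\nu(F)=h_\mu(T)$.

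I do not expect a serious obstacle here: the substantive work has already been done, in Theorem~\ref{thm:Keller3} (liftability, which is where the entropy hypothesis enters, both to make the construction meaningful and to give $\hat\mu\circ\pi^{-1}=\mu$) and in Lemma~\ref{l:sigmaalgebra}, whose content is precisely what turns $\phi^{-1}\mathcal Z$ into a two-sided generator. The only points needing care are bookkeeping: using the intertwining $T\circ\phi=\phi\circ F$, the identity $\nu\circ\phi^{-1}=\mu$, and $\nu$-completeness of $\mathcal C$ consistently, and in particular checking that forward refinements on $Y$ correspond under $\phi^{-1}$ to forward refinements on $X$. A shorter alternative would be to invoke Rohlin's uniqueness of the natural extension together with the classical facts that a natural extension is ergodic exactly when its base is and has the same metric entropy; I would favour the hands-on version above since all the required ingredients are already at hand.
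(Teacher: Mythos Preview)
Your argument is correct, but it takes a different route from the paper. The paper's proof is the one-line ``shorter alternative'' you mention at the end: since $\mu$ is $T$-invariant, Theorem~\ref{thm:Keller3} gives $\hat\mu\circ\pi^{-1}=\mu$, so $(Y,\mathcal C,\nu,F)$ is the natural extension of $(X,\mathcal B,\mu,T)$; then one simply cites Rohlin \cite{Roh64} for the general facts that ergodicity and metric entropy are preserved under passage to the natural extension. Your hands-on version instead exploits the concrete output of Lemma~\ref{l:sigmaalgebra} to exhibit $\phi^{-1}\mathcal Z$ as a finite two-sided generator, and then runs the standard approximation argument for ergodicity and the Kolmogorov--Sinai computation for entropy directly. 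This buys self-containment---you never leave the paper's toolbox---and makes transparent exactly where (c3), (c5), and Lemma~\ref{l:sigmaalgebra} enter; the paper's approach buys brevity and underscores that nothing specific to the construction is needed once the natural-extension property is in hand. One small point of care in your ergodicity step: (c5) is stated for strictly invariant sets, so when you arrive at $\mu(T^{-1}B\triangle B)=0$ you are implicitly using that, for a $T$-invariant $\mu$, this is equivalent to the existence of a strictly invariant set equal to $B$ mod~$\mu$; this is standard and harmless here since $T$-invariance of $\mu$ is part of the hypothesis.
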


\begin{proof}
If $\mu$ is $T$-invariant, then $\hat \mu \circ \pi^{-1}=\mu$. By (c5) $\mu$ is ergodic and since ergodicity and metric entropy are preserved under taking the natural extension (see \cite{Roh64}), the result follows. 
\end{proof}

\begin{remark}{\rm
The fact that the natural extension of $(X,T)$ in general contains more points, \ie more backward orbits, than the natural extension of $(\hat X,\hat T)$ was already observed by Buzzi \cite{Buz97,Buz99}. He shows that the set of points in the natural extension of $(X,T)$ that are not represented in the natural extension of the Markov extension carry no measure of positive entropy (or of entropy near the maximal entropy for fairly general higher dimensional systems).
}\end{remark}

\section{Bernoulli-like properties} \label{sec:Bernoulli}
In this section we will discuss Bernoulli-like properties of the natural extension and how to transfer them from the natural extension to the original system and back. Let us first recall some definitions.

By a {\em two-sided (resp.\ one-sided) Bernoulli shift} we mean a shift space $({\mathcal A}^\Z, \sigma)$ (resp.\ $({\mathcal A}^{\N_0}, \sigma)$) on a finite or countable alphabet $\mathcal A$ with left shift $\sigma$, and equipped with a stationary product measure based on a probability vector $(p_1, \dots, p_n)$. An invertible dynamical system $(Y, \mathcal C, \nu, F)$ that is isomorphic to a Bernoulli shift is called {\em Bernoulli} itself.

If  $(X, {\mathcal B}, \mu, T)$ is non-invertible, and isomorphic to a one-sided Bernoulli shift, then it is called {\em one-sided Bernoulli} itself. This is a much stronger property than the natural extension of $(X, {\mathcal B}, \mu, T)$ being isomorphic to a two-sided Bernoulli shift (cf.\ \cite{BH09}); if the latter happens, the non-invertible system is called Bernoulli. It is a well-known result by Ornstein \cite{Orn70} 
(and \cite{Smo72} and \cite{Orn71} for infinite alphabets) that entropy is a complete invariant for two-sided Bernoulli systems with positive or infinite entropy, but this is not true in general for the non-invertible case.

One theorem for which having a geometric version of the natural extension is useful is Theorem 3 from \cite{Sal73} by Saleski. To apply this theorem, we first show that the natural extension $F$ has an induced transformation that is Bernoulli. Consider one of the rugs $R_u$, $u \in \alpha$, and define the first return times for $z \in R_u$ under $F$ as
\[ 
\tau_u(z) = \inf \{ n \ge 1\, : \, F^n z \in R_u\}.
\]
By the Poincar\'e Recurrence Theorem, $\tau_u(z)< \infty$ for $\nu$-a.e.~$z \in R_u$. 
Define the induced map $F_u:R_u \to R_u$ by $F_u z = F^{\tau_u(z)}z$.
\begin{theorem}\label{t:induced}
Each map $F_u: R_u \to R_u$, $u \in \alpha$, is Bernoulli.
\end{theorem}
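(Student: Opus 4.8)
The plan is to show that each induced map $F_u$ is isomorphic to a countable-state Bernoulli shift by exploiting the product structure of $\nu$ on the rugs together with the Markov structure of the Hofbauer tower. First I would observe that $(\hat X, \hat T, \hat \mu)$ restricted to the induced system on $D_u$ is a countable-state Markov shift: by construction $\mathcal D$ is a Markov partition, so the first-return map $\hat T_u : D_u \to D_u$ codes orbits by their excursions, and each excursion is a finite path in $(\mathcal D, \to)$ from $u$ back to $u$ that does not hit $u$ in between. Since the transition probabilities of a Markov measure depend only on the current state, the successive excursion-symbols of a $\hat \mu$-typical orbit are \emph{independent}: the return to $D_u$ ``forgets'' the past, because the conditional measure on $D_u$ is always the same normalized restriction. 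Thus $(\hat X, \hat T_u, \hat \mu|_{D_u})$ is already isomorphic to a Bernoulli shift on the countable alphabet of first-return excursions.

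Next I would lift this to the natural extension. The rug $R_u = D_u \times [0, \rho(D_u)]$ carries the product measure $\bar\mu \times m$, and the key geometric fact, already established in the construction of $F$, is that $F$ stretches horizontally by exactly the factor by which it squeezes vertically, and it stacks images of rugs as disjoint full-width strips according to the ordering on $\mathcal D$. Consequently, when we follow an orbit of $F_u$, the vertical coordinate after one induced step is an affine function of the previous vertical coordinate whose \emph{contraction ratio and additive offset depend only on the excursion symbol} (the sequence of partition elements visited and the order-positions encountered along the way, via the inner sum in \eqref{q:natexmap}). This is the standard ``natural extension of a Bernoulli endomorphism is Bernoulli'' mechanism: the forward coordinate $\hat x$ generates the future excursion symbols, which are i.i.d.; the vertical coordinate $y$, together with its nested preimages, encodes the (reversed) past sequence of excursion symbols, and because the vertical maps are affine contractions indexed by the symbol, the past-coordinate is measurably a function of the bi-infinite symbol sequence, with the two one-sided halves independent under $\nu|_{R_u}$. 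Concretely, I would build the isomorphism $R_u \to \mathcal A_u^{\Z}$ by sending $z$ to the bi-infinite sequence of first-return excursion symbols of the $F$-orbit through $z$, show it is a measure isomorphism onto a product measure (the future half from the Markov/Bernoulli structure on $\hat X$, the past half from the affine-contraction coding of $y$), and verify it intertwines $F_u$ with the shift.

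The main obstacle I anticipate is the measurability and injectivity of the ``past'' coding of the vertical coordinate $y$: one must check that two points in $R_u$ with the same future excursion symbols and the same infinite sequence of past excursion symbols actually coincide, i.e.\ that the nested affine contractions determine $y$ uniquely. This is where one needs the contraction factors to be genuinely $<1$ along every excursion — which follows from $s_j \ge 1$ together with the fact that a return excursion to $D_u$ that leaves the base of the tower must pass through some $Z_j$ with $s_j > 1$ (otherwise entropy and hence the nontriviality of the tower would fail). One also must rule out that mass escapes to the horizontal boundaries $D_u \times \{0, \rho(D_u)\}$ of the rugs, but this was already handled in Lemma~\ref{l:invertible} and in the proof of Lemma~\ref{l:sigmaalgebra}, where the set $\cup_{n \in \Z} F^n M$ was shown to be $\nu$-null. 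Granting these points, the independence of future symbols is immediate from the Markov property of $\hat \mu$, the independence of the past from the future is the usual natural-extension fact (the $y$-coordinate is $\sigma(\text{past symbols})$-measurable and independent of $\hat x$ conditionally on nothing, because $\nu$ is a product of $\bar\mu$ and $m$ on each rug), and the conclusion that $F_u$ is Bernoulli follows.

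I would present the argument in this order: (1) the first-return map on $D_u$ in $\hat X$ is a Bernoulli shift on the excursion alphabet $\mathcal A_u$, using the Markov property; (2) extend to $R_u$ by recording the additional finite data (partition labels and order-positions) needed to reconstruct the affine vertical map, noting this data is itself a function of the excursion and hence does not enlarge the alphabet beyond a countable set; (3) define the coding map $\Phi_u : R_u \setminus (\text{null set}) \to \mathcal A_u^{\Z}$ and check it is bi-measurable, injective a.e.\ (here the vertical contraction estimate enters), and intertwines $F_u$ with $\sigma$; (4) identify $\nu \circ \Phi_u^{-1}$ as a stationary product measure, using that the future symbols are i.i.d.\ and independent of the past half because $\nu$ is a product measure on the rug; conclude via Definition~\ref{def:naturalextension} that $F_u$ is Bernoulli.
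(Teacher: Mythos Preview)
Your proposal is correct and shares its first half with the paper: both arguments begin by observing that the first-return map $\hat T_u$ on $D_u$ is a full-branched map (each branch $\pi^{-1}(Z_{j_0\cdots j_n})\cap D_u$ is mapped onto all of $D_u$) with piecewise constant Jacobian, hence isomorphic to a one-sided Bernoulli shift on the countable alphabet of return excursions.

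Where you diverge is in the lifting to $R_u$. You build the isomorphism $\Phi_u:R_u\to\mathcal A_u^{\Z}$ by hand, coding the vertical coordinate $y$ via the nested affine contractions and then checking injectivity, measurability, and that $\nu\circ\Phi_u^{-1}$ is a product measure. The paper instead makes a single structural observation: $(R_u,\mathcal C\cap R_u,\nu|_{R_u},F_u)$ is the natural extension of $(D_u,\hat{\mathcal B}\cap D_u,\hat\mu|_{D_u},\hat T_u)$ with factor map $\hat\pi$ --- this follows by re-running the construction of Section~\ref{ss:natural_extension} with $\hat T_u$ in the role of $T$ --- and then invokes the standard fact that the natural extension of a one-sided Bernoulli shift is the corresponding two-sided Bernoulli shift. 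This buys the paper a two-line conclusion and, more importantly, completely sidesteps the obstacle you correctly flag: your explicit coding requires each return excursion to have strict vertical contraction $\prod s_{j_k}^{-1}<1$, and the justification you sketch (``a return excursion must pass through some $Z_j$ with $s_j>1$, otherwise entropy would fail'') is not airtight as stated for individual excursions. The paper's route needs no such estimate, since the abstract natural-extension-of-Bernoulli argument does not depend on any geometric contraction rate.
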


\begin{proof}
Consider the partition $\mathcal P = \{ P_1, P_2, \ldots \}$ of $D_u$ into sets $P_n$ such that
\[ P_n = \{ \hat \pi(z) \in D_u \, : \, \tau_u(z)=n\}.\]
The map $\phi$ from Definition~\ref{def:naturalextension}
acts as projection $\phi:R_u \to D_u$.
For each $z \in R_u$ with $\tau_u(z)=n$, there is a corresponding $n$-path in $(\mathcal D, \to)$ from $D_u$ to $D_u$. Hence, there is an ($n+1$)-cylinder $Z_{j_0 \cdots j_n}$, such that $\phi(z) \in Z_{j_0 \cdots j_n}$ and $\hat T^n\big(\pi^{-1}(Z_{j_0 \cdots j_n})\cap D_u\big)=D_u$. Therefore, each set $P_n$ can be written as a finite union of pairwise disjoint sets:
\[ 
P_n = \bigcup \{ \pi^{-1}(Z_{j_0 \cdots j_n})\cap D_u \, : \, Z_{j_0 \cdots j_n} \in \mathcal Z_{n+1}, \, 
T^n Z_{j_0 \cdots j_n} =\pi(D_u) \}.
\]
Note that the value of $\tau_u(z)$ for $z=(\hat x,y,u) \in R_u$ does not depend on $y$. Thus, we can write $\tau_u(z) = \tau_u(\hat x)$ for almost all $\hat x \in D_u$. Define the map $\hat T_u \hat x = \hat T^{\tau_u(\hat x)} \hat x$. Then $\hat T_u \big( \pi^{-1}(Z_{j_0 \cdots j_n})\cap D_u\big)=D_u$ and thus $\hat T_u$ is Bernoulli.

Using the same arguments as before, we see that $(R_u, \mathcal C \cap R_u, \nu_u:=\nu|_{R_u}, F_u)$ is the natural extension of $(D_u, \hat{\mathcal B} \cap D_u, \hat \mu|_{D_u}, \hat T_u)$ with factor map $\hat \pi$. Since $\hat T_u$ is Bernoulli, $F_u$ is Bernoulli as well.
\end{proof}

\vskip .3cm
\noindent Theorem~\ref{t:induced} combined with Saleski's result implies the following.

\begin{theorem}[Saleski \cite{Sal73}]
Suppose $(Y, {\mathcal C}, \nu, F)$ is weakly mixing. Fix $u \in \alpha$ and suppose that the following entropy condition holds:
\[ H_{\nu_u} \big( \vee_{k=1}^{\infty} \vee_{n=1}^{\infty} F^k_u \, Y_n | \vee_{i=1}^{\infty} F^i_u \mathcal P \big) < \infty,\]
where $Y_n = \{ R_u-\cup_{j=1}^n F^{-j} R_u, R_u \cap \cup_{j=1}^n F^{-j}R_u \}$ and $\mathcal P$ is a Bernoulli partition of $(R_u, F_u)$. Then $F$ is a Bernoulli automorphism and hence $T$ is Bernoulli as well.
\end{theorem}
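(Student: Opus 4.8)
\noindent\emph{Proof strategy.} The plan is to verify the hypotheses of Saleski's Theorem~3 in \cite{Sal73} and then translate its conclusion back to $(X,\mathcal B,\hat\mu\circ\pi^{-1},T)$. Weak mixing of $(Y,\mathcal C,\nu,F)$ and the displayed conditional entropy estimate are assumed outright, so the only substantive input is that the induced transformation $F_u$ on the base $R_u$ is Bernoulli together with an explicit Bernoulli generator $\mathcal P$ --- which is precisely Theorem~\ref{t:induced} and its proof.

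First I would extract from the proof of Theorem~\ref{t:induced} the concrete Bernoulli structure: the induced system $(D_u,\hat{\mathcal B}\cap D_u,\hat\mu|_{D_u},\hat T_u)$, with $\hat T_u\hat x=\hat T^{\tau_u(\hat x)}\hat x$, is a countable-state one-sided Bernoulli shift, because $\hat T_u$ maps each cylinder piece $\pi^{-1}(Z_{j_0\cdots j_n})\cap D_u$ (with $T^n Z_{j_0\cdots j_n}=\pi(D_u)$) onto all of $D_u$; these pieces form a generating independent partition. Its natural extension is $(R_u,\mathcal C\cap R_u,\nu_u,F_u)$ with factor map $\hat\pi$, so $F_u$ carries a two-sided Bernoulli structure whose generating Bernoulli partition $\mathcal P$ is the $\hat\pi$-pullback of that cylinder partition (a refinement of the return-time partition $\{P_n\}$ used in Theorem~\ref{t:induced}). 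This $\mathcal P$ is the one appearing in the entropy condition.

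Next I would record the tower picture. Since weak mixing implies ergodicity of $(Y,\nu,F)$ and $\nu(R_u)=\hat\mu(D_u)>0$, the Kakutani skyscraper representation gives, mod $\nu$, that $Y=\bigsqcup_{j\ge0}F^j\big(\{z\in R_u:\tau_u(z)>j\}\big)$, with $F$ moving one floor up except on the top floors, where it coincides with $F_u$ on the base. Under this identification the partitions $Y_n$ and the conditional entropy of the statement are exactly those in Saleski's hypotheses. Applying Saleski's Theorem~3 --- valid because $(Y,F)$ is weakly mixing, $F_u$ is Bernoulli with Bernoulli generator $\mathcal P$, and the entropy bound holds --- we conclude that $F$ is a Bernoulli automorphism. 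Finally, since $(Y,\mathcal C,\nu,F)$ is the natural extension of $(X,\mathcal B,\hat\mu\circ\pi^{-1},T)$ as established in Section~\ref{ss:natural_extension}, the definition of Bernoullicity for non-invertible systems recalled at the start of Section~\ref{sec:Bernoulli} shows that $T$ is Bernoulli.

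The step I expect to be the only real obstacle is verifying, line by line, that our setup matches the exact framework of \cite{Sal73}: that the inducing set $R_u$, the generator $\mathcal P$, and the tower description of $Y$ over $R_u$ satisfy Saleski's standing assumptions, that the return-time partition and the Bernoulli generator are correctly reconciled after refinement into cylinder pieces, and that the various $\nu$-null exceptional sets (rug boundaries, and the set $E$ from Lemma~\ref{l:sigmaalgebra}) can be safely discarded. This is bookkeeping rather than genuine mathematical difficulty, since weak mixing and the entropy bound are hypotheses and no quantitative estimate remains to be proved.
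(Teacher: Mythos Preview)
Your proposal is correct and matches the paper's approach: the paper gives no proof at all for this theorem, merely prefacing it with the sentence ``Theorem~\ref{t:induced} combined with Saleski's result implies the following,'' and then stating the theorem as a direct application of \cite{Sal73}. Your write-up simply fleshes out this one-line justification by spelling out the Bernoulli generator $\mathcal P$ from the proof of Theorem~\ref{t:induced}, the Kakutani tower over $R_u$, and the passage back to $T$ via the natural extension---all of which the paper leaves implicit.
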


Recall the construction of the Markov shift at the end of Section~\ref{ss:Hofbauer}. The invertibility of $F$ allows us to associate to $F$ a two-sided countable state topological Markov shift and to use all the results available for this type of maps. To construct this Markov shift, first assign to a.e.~$z \in Y$ a two-sided sequence $b(z)=(b_k)_{k \in \Z}$ by setting $b_k = u$ if $F^k z \in R_u$. Define the map $\psi : Y \to \alpha^\Z$ by $\psi(z) = b(z)$ and let $\Omega = \psi(Y)$. On $\Omega$, let $\mathcal P$ denote the product $\sigma$-algebra and let $\sigma$ be the left shift as usual. The Markov measure $m_{\underline v, P}$ is given by the probability vector $\underline v = (v_u)_{u \in \alpha}$ with entries $v_u = \hat \mu(D_u)$ and the (possibly) infinite probability matrix $P = (p_{t,u})_{t,u \in \alpha}$ defined by
\[ p_{t,u} = \frac{\bar \mu( D_t \cap \hat T^{-1}D_u) }{\bar \mu(D_t)} 
,\]
\begin{proposition}
The systems $(Y, \mathcal C, \nu, F)$ and $(\Omega, \mathcal P, m_{\underline v}, \sigma)$ are isomorphic with isomorphism $\psi$, \ie $\psi$ satisfies (ne2), (ne3) and (ne4) and is injective.
\end{proposition}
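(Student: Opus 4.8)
The plan is to verify the three isomorphism conditions (ne2), (ne3), (ne4) for $\psi$ and then argue injectivity, reusing the structure already built for $(\hat X, \hat T)$ as a subshift of $(\alpha^{\N}, \sigma)$. First I would check that $\psi$ is well-defined and measurable on a full-measure subset: for $\nu$-a.e.\ $z$ the doubly infinite itinerary $b(z) = (b_k)_{k \in \Z}$ is defined because $F$ is invertible $\nu$-a.e.\ by Lemma~\ref{l:invertible}, so all iterates $F^k z$, $k \in \Z$, exist and lie in a unique rug $R_{b_k}$ (the rug boundaries $M = \cup_u (D_u \times \{0, \rho(D_u)\})$ and its grand orbit $\cup_{n \in \Z} F^n M$ have $\nu$-measure zero, as already noted in the proof of Lemma~\ref{l:invertible}). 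Measurability of $\psi$ follows because each coordinate $z \mapsto b_k(z)$ is measurable, being determined by which rug $F^k z$ lies in. The image $\Omega = \psi(Y)$ is $\sigma$-invariant and consists of paths in the Markov graph: $b_k = t$ and $b_{k+1} = u$ can occur only if $D_t \to D_u$ (equivalently $\bar\mu(D_t \cap \hat T^{-1}D_u) > 0$), so $m_{\underline v, P}$ is indeed supported on $\Omega$.

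Next I would establish (ne4), the intertwining $\psi \circ F = \sigma \circ \psi$, which is immediate from the definition: $(b(Fz))_k = u$ iff $F^{k+1}z \in R_u$ iff $(b(z))_{k+1} = u = (\sigma b(z))_k$. For (ne3), that $\nu \circ \psi^{-1} = m_{\underline v, P}$, it suffices to check the two measures agree on cylinders $[u_0 \cdots u_n]$ in $\Omega$; since $F$ is measure preserving, $\nu(\psi^{-1}[u_0\cdots u_n]) = \nu(\{z : F^k z \in R_{u_k}, 0 \le k \le n\})$, and one computes this by pushing forward along $\hat\pi$ to $\hat X$ and using that the rug over $D_t$ has $\nu$-measure $\bar\mu(D_t)\rho(D_t) = \hat\mu(D_t)$ together with $\hat T$-invariance of $\hat\mu$ and the identity $\hat\mu(D_t \cap \hat T^{-1}D_u) = \rho(D_t)\,\bar\mu(D_t \cap \hat T^{-1}D_u)$ from \eqref{q:hatmudu}; this yields exactly $v_{u_0} p_{u_0,u_1}\cdots p_{u_{n-1},u_n}$. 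One should also verify that the transition probabilities $p_{t,u}$ so defined, together with $v_u = \hat\mu(D_u)$, genuinely satisfy the Markov-measure axioms (row sums one, stationarity), which follows from the two displayed identities for $\rho$ in the proof of Lemma~\ref{l:invertible} after dividing by $\rho(D_t)$, respectively from \eqref{q:hatmudu}.

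The main work, and the step I expect to be the real obstacle, is injectivity of $\psi$ (mod $\nu$), which also delivers (ne2) (surjectivity onto $\Omega$ is by definition of $\Omega$, and bi-measurability of the inverse then follows from injectivity on a standard Borel space). The argument should parallel Lemma~\ref{l:sigmaalgebra}: if $z \neq z'$ have the same two-sided itinerary then in particular they lie in the same rug $R_u$ and all their $F$-preimages lie in matching rugs, hence (removing the null set $E = \cap_n F^{n-1}E_n$ from Lemma~\ref{l:sigmaalgebra} and its $F$-orbit) their $\phi$-images agree for all backward times, forcing $\phi(F^{-n}z) = \phi(F^{-n}z')$, and the same cylinder-collapsing argument used in Lemma~\ref{l:sigmaalgebra} — via Lemma~\ref{l:bn}, for $n$ large the relevant $(n+1)$-cylinder is interior to its rug — forces $\hat\pi(z) = \hat\pi(z')$; finally equality of itineraries pins down the vertical coordinate because $F$ contracts each rug by a definite factor and the squeezed images are stacked in the order of $\mathcal D$, so the doubly infinite symbol sequence locates $y$ exactly. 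Indeed, since $(Y,\mathcal C,\nu,F)$ was shown to be the natural extension of $(X,\mathcal B,\hat\mu\circ\pi^{-1},T)$ and the coordinate partition $\{R_u\}$ refines $\phi^{-1}\mathcal Z$ and generates $\mathcal C$ under $F$ by Lemma~\ref{l:sigmaalgebra}, the map $\psi$ is essentially the symbolic coding associated with a generating partition of an invertible system, which is automatically injective mod $\nu$; the only subtlety is confirming that $\{R_u\}$ is generating, i.e.\ that $\bigvee_{k\in\Z} F^{-k}\{R_u\} = \mathcal C$ mod $\nu$, which again reduces to Lemma~\ref{l:sigmaalgebra} since $\{R_u\} = \mathcal D$ pulled back under $\hat\pi$ refines $\phi^{-1}\mathcal Z$ and $\hat\pi$ together with the vertical contraction separates points.
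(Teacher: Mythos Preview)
Your proposal is correct and follows the paper's approach for (ne2), (ne3), (ne4): the intertwining is immediate, the measure identity is checked on cylinders, and surjectivity is by definition of $\Omega$. Where you diverge is in the injectivity argument. The paper dispatches this in one line: if $\psi(z)=\psi(z')$ then $F^n z, F^n z'$ lie in the same $D_{u_n}\subseteq Z_{j_n}$ for all $n\in\Z$, and since $F$ is expanding in the horizontal direction (and, implicitly, $F^{-1}$ is expanding in the vertical direction), this forces $z=z'$. You instead route through Lemma~\ref{l:sigmaalgebra} to conclude that $\{R_u\}$ is a two-sided generator for $(Y,F,\nu)$, whence the itinerary map is injective mod $\nu$. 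That is a perfectly valid argument, and it has the virtue of making explicit why the coding separates points (generating partition $\Rightarrow$ injective symbolic map), but it drags in the null set $E$ and Lemma~\ref{l:bn}, none of which are actually needed here: knowing the forward $\{R_u\}$-itinerary already determines the $\mathcal Z$-itinerary of $\phi(z)$ (since each $D_u\subseteq Z_j$), and (c3) then pins down $\hat\pi(z)$ directly; the backward itinerary pins down the vertical coordinate because each application of $F^{-1}$ expands the vertical fibre by a factor $s_j>1$. Your intermediate claim ``their $\phi$-images agree for all backward times'' is correct but not, as you phrase it, a consequence of removing $E$; it follows straight from the forward itinerary at each time $-n$ via (c3). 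In short: your argument works, but the paper's direct hyperbolicity argument is both shorter and avoids the boundary bookkeeping entirely.
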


\begin{proof}
To show that $\psi$ is $\nu$-a.e.~injective, note that $\psi(z)=\psi(z')$ implies that $F^n z, F^n z' \in D_{u_n}\subseteq Z_{j_n}$ for some sequence $(u_n) \in \alpha^{\Z}$. Since $F$ is expanding in the horizontal direction, this is only possible if $z = z'$. It is immediate that $\psi$ is surjective and bi-measurable and that $\psi \circ F = \sigma \circ \psi$. Furthermore, it is straightforward to check that $\nu \circ \psi^{-1} = m_{\underline v}$. Hence, $\psi$ is a bi-measurable bijection that satisfies (ne2), (ne3) and (ne4) and is thus an isomorphism. 
\end{proof}

\vskip .3cm
A (non-invertible) map $T$ is called {\em exact} on $(X, \mathcal B, \mu)$ if $\cap_{n=1 }^{\infty} T^{-n} \mathcal B = \{ \emptyset, X\}$. An invertible map $F$ is called a {\em $K$-automorphism} on $(Y, {\mathcal C}, \nu)$ if there is a sub-$\sigma$-algebra $\mathcal C_0 \subseteq \mathcal C$ satisfying (i) $F^{-1} \mathcal C_0 \subseteq \mathcal C_0$, (ii) $\cap_{n=1 }^{\infty} F^{-n} \mathcal C_0 = \{ \emptyset, Y\}$ and (iii) the $\sigma$-algebra generated by $\cup_{n=1}^{\infty} F^n \mathcal C_0$ equals $\mathcal C$. An ergodic Markov shift is a $K$-automorphism if and only if it is strong mixing (see \cite{Ito87} for example). A result from Rohlin \cite{Roh64} says that a map is a $K$-automorphism if and only if it is the natural extension of an exact transformation. This gives the following corollary.

\begin{corollary}
If $(X, \mathcal B, \mu, T)$ satisfies (c1)-(c6) and $\mu$ is $T$-invariant and liftable, then $T$ is exact if and only if $F$ is a $K$-automorphism if and only if $F$ is strongly mixing if and only if the associated Markov shift $(\Omega, \mathcal P, m_{\underline v}, \sigma)$ is irreducible and aperiodic. 
\end{corollary}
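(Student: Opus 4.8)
The plan is to assemble the corollary from four equivalences, each of which is either a classical theorem or an immediate consequence of the structural results already established in this section. Since $\mu$ is $T$-invariant and liftable we have $\hat\mu\circ\pi^{-1}=\mu$ by Proposition~\ref{p:kel2}, so the natural extension $(Y,\mathcal C,\nu,F)$ is genuinely the natural extension of $(X,\mathcal B,\mu,T)$ (not merely of $(X,\mathcal B,\hat\mu\circ\pi^{-1},T)$), and by the previous proposition it is isomorphic via $\psi$ to the two-sided Markov shift $(\Omega,\mathcal P,m_{\underline v},\sigma)$. The equivalences to be chained are: (a) $T$ exact $\iff$ $F$ is a $K$-automorphism; (b) $F$ is a $K$-automorphism $\iff$ the Markov shift $(\Omega,\mathcal P,m_{\underline v},\sigma)$ is a $K$-automorphism; (c) a $K$-automorphism Markov shift $\iff$ strong mixing of the shift; and (d) strong mixing of the shift $\iff$ the shift is irreducible and aperiodic. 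Then strong mixing of $F$ is inserted as equivalent to strong mixing of $\Omega$ since $\psi$ is an isomorphism.

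For step (a) I would cite Rohlin~\cite{Roh64}: a measure-preserving invertible transformation is a $K$-automorphism if and only if it is (isomorphic to) the natural extension of an exact endomorphism; since $(Y,\mathcal C,\nu,F)$ is the natural extension of $(X,\mathcal B,\mu,T)$ by the theorem above, this is exactly the first equivalence. Step (b) is immediate from the isomorphism $\psi$: the $K$-property is an isomorphism invariant (it is defined purely in terms of the measure-algebra and the $\sigma$-algebra structure), so $F$ is a $K$-automorphism iff $\sigma$ on $(\Omega,\mathcal P,m_{\underline v})$ is. The same isomorphism transports strong mixing between $F$ and $\sigma$, giving the ``$F$ strongly mixing'' link. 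For step (c) I would invoke the cited fact (\cite{Ito87}) that an ergodic Markov shift is a $K$-automorphism precisely when it is strong mixing; note $\sigma$ here is ergodic because $\nu$ is (by Proposition~\ref{p:erg}, $\hat\mu$ is ergodic, hence so is $\nu=\hat\mu\times m$ lifted appropriately, and ergodicity passes through $\psi$). Step (d) is the standard Markov-shift dichotomy: an irreducible recurrent countable-state Markov shift with a stationary measure is mixing iff it is aperiodic, and conversely strong mixing forces irreducibility (a non-trivial closed invariant set of symbols would obstruct mixing) and aperiodicity (a period $>1$ gives a non-trivial cyclic factor).

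The main obstacle, and the only place care is needed, is the countable-state setting: several of the classical statements (the $K \iff$ mixing equivalence, and the mixing $\iff$ irreducible-aperiodic dichotomy) are most cleanly stated for finite-state shifts, and for a countably infinite alphabet one must ensure the Markov measure $m_{\underline v,P}$ is genuinely a probability measure and that the chain is positive recurrent — but this is built in here, since $\underline v_u=\hat\mu(D_u)$ with $\sum_u\hat\mu(D_u)=1$ is a stationary probability vector for $P$, so the chain is automatically positive recurrent and the cited results (\cite{Ito87}) apply in the countable-state generality. A secondary point to check is that ``irreducible and aperiodic'' of $(\Omega,\mathcal P,m_{\underline v},\sigma)$ should be read relative to the support of $m_{\underline v}$, i.e.\ on the states $D_u$ of positive measure, which is consistent with the running assumption $\hat\mu(D_u)>0$ for all $u\in\alpha$ made when constructing the natural extension. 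Modulo these bookkeeping remarks the proof is just the concatenation of the four cited equivalences, so I would write it as a short paragraph:

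\begin{proof}
Since $\mu$ is $T$-invariant and liftable, $\hat\mu\circ\pi^{-1}=\mu$ by Proposition~\ref{p:kel2}, so $(Y,\mathcal C,\nu,F)$ is the natural extension of $(X,\mathcal B,\mu,T)$ and, by the preceding proposition, is isomorphic via $\psi$ to the two-sided countable-state Markov shift $(\Omega,\mathcal P,m_{\underline v},\sigma)$. By Rohlin~\cite{Roh64}, an invertible measure-preserving system is a $K$-automorphism if and only if it is the natural extension of an exact endomorphism; hence $T$ is exact if and only if $F$ is a $K$-automorphism. The $K$-property and strong mixing are isomorphism invariants, so $F$ is a $K$-automorphism (resp.\ strongly mixing) if and only if $\sigma$ on $(\Omega,\mathcal P,m_{\underline v})$ is. The measure $m_{\underline v}$ is a stationary probability measure with $\underline v_u=\hat\mu(D_u)>0$ for all $u\in\alpha$, so the chain is positive recurrent; moreover $\nu$ is ergodic (the ergodicity of $\hat\mu$ from Proposition~\ref{p:erg} passes through $\hat\pi$ and $\psi$), hence $\sigma$ on $\Omega$ is ergodic. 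For an ergodic, positive recurrent countable-state Markov shift, being a $K$-automorphism is equivalent to strong mixing (\cite{Ito87}), and strong mixing is in turn equivalent to the transition graph being irreducible and aperiodic on the support. Chaining these equivalences yields the statement.
\end{proof}
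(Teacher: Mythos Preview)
Your proposal is correct and matches the paper's approach exactly: the paper does not write a separate proof but simply notes in the two sentences preceding the corollary that an ergodic Markov shift is a $K$-automorphism iff it is strongly mixing (citing \cite{Ito87}) and that Rohlin \cite{Roh64} gives the equivalence between exactness of $T$ and the $K$-property of its natural extension, leaving the rest (the isomorphism $\psi$ to the Markov shift and the standard mixing $\iff$ irreducible-aperiodic dichotomy) as understood. Your write-up is in fact more careful than the paper's, since you explicitly address positive recurrence in the countable-state setting and the transport of ergodicity and mixing through $\psi$.
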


\section{Interval maps}\label{sec:interval}

In this section we apply the construction of the natural extension to the specific case of piecewise linear expanding interval maps.

\subsection{Piecewise linear interval maps}\label{ss:intervalmaps}
Let $T:[0,1] \to [0,1]$ be a piecewise linear expanding map. Let the partition $\mathcal Z$ consist of the closures of all maximal intervals of monotonicity for $T$. Then each set $D \in \mathcal D$ is an interval. Conditions (c1)-(c5) are immediate for Lebesgue measure $m$. Each set $Z_j \in \mathcal Z$ is an interval and thus $T^n( \cup_{j=1}^N \partial Z_j )$ consists of only finitely many points and $m \big( T^n ( \cup_{j=1}^N \partial Z_j ) \big)=0$. This implies (c6). Since $T$ is a piecewise linear expanding interval map Proposition~\ref{prop:noninvariant} applies and $m$ is liftable. Hence the construction of the natural extension from Section \ref{ss:natural_extension} applies.

There always exists an ergodic invariant measure $\mu \ll m$ on $[0,1]$. This measure satisfies all conditions except possibly (c4). Recall the definition of capacity from \eqref{q:cap}. Note that $\#(\partial_n D)=2$ for all $n$ and hence $\text{cap}(T) =0$. Then a $T$-invariant measure $\mu$ is liftable whenever $h_{\mu}(T)>0$. This was first proved by Hofbauer in \cite{Hof79}. Hence, if one can show (c4) in a particular case, then one could also use the measure $\mu$.

This class of maps includes any piecewise linear expanding map of which the absolute value of the slope is constant. Here the entropy is equal to the log of the absolute value of the slope. For such maps, there is an additional result by Rychlik \cite{Ryc83}: If the natural extension map $F$ is a $K$-automorphism, then $F$ is {\em weakly Bernoulli}, \ie for each $\varepsilon >0$ there is a positive integer $N$, such that for all $m \ge 1$, all sets $A \in \bigvee_{k=0}^m F^{-k} \mathcal C$ and $C \in \bigvee_{k=-N-m}^{-N} F^{-k} \mathcal C$ we have
\[ |\nu(A \cap C)- \nu(A)\nu(C)|< \varepsilon,\]
where $\bigvee_{i=\ell}^k F^{-i}\mathcal C$ denotes the smallest $\sigma$-algebra containing all elements in $F^{-i}\mathcal C$. The weak Bernoullicity of $F$ implies that of $T$.

\subsection{Positive and negative slope $\beta$-transformations}\label{ss:beta}
Let $1<\beta<2$. The positive slope $\beta$-transformation is defined by $T_{\beta}x = \beta x \pmod 1$. It is a very well studied map with many interesting properties. It has a unique measure of maximal entropy $\mu_1$, equivalent to $m$, with entropy $h_{\mu_1}(T)=\log \beta$. Note that the Hofbauer tower gives a $T$-invariant measure $\hat \mu \circ \pi^{-1}$ by lifting $m$. By ergodic decomposition this measure is either equal to $\mu_1$ or not ergodic.
\begin{figure}[ht]
\centering
\includegraphics{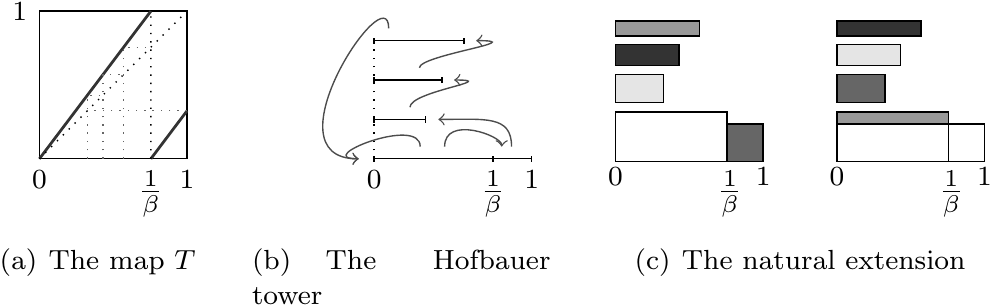}
\caption{In (a) we see the positive $\beta$-transformation with $\beta$ equal to the real root of $x^3-x-1$. (c) shows its natural extension. The transformation $F$ maps the areas on the left to the areas on the right with the same colour.}
\label{f:posbeta}
\end{figure}
In Figure~\ref{f:posbeta} we see an example of a positive slope $\beta$-transformation for a specific value of $\beta$ and its natural extension. In \cite{DKS} Dajani et al.~used a similar version of the geometric the natural extension of $T$ and showed that it is Bernoulli. Since all natural extensions of the same system are isomorphic (\cite{Roh64}), the natural extension given here is also Bernoulli.

\begin{remark}{\rm Note that we start with Lebesgue measure $m$ on the unit interval. By Proposition~\ref{p:erg} the construction produces an ergodic invariant probability measure $\hat \mu \circ \pi^{-1}$ for $T_{\beta}$. Since $\hat \mu \circ \pi^{-1} \ll m$, and there is only one measure with these properties, we automatically have that $\hat \mu \circ \pi^{-1}=\mu_1$.
}\end{remark}

\vskip .2cm
The negative $\beta$-transformation is defined on the unit interval $[0,1]$ by $Sx = -\beta x \pmod 1$. It has a unique measure of maximal entropy $\mu_2$, absolutely continuous with respect to Lebesgue. Also for this map $h_{\mu_2}(S)=\log \beta$.
\begin{figure}[ht]
\centering
\includegraphics{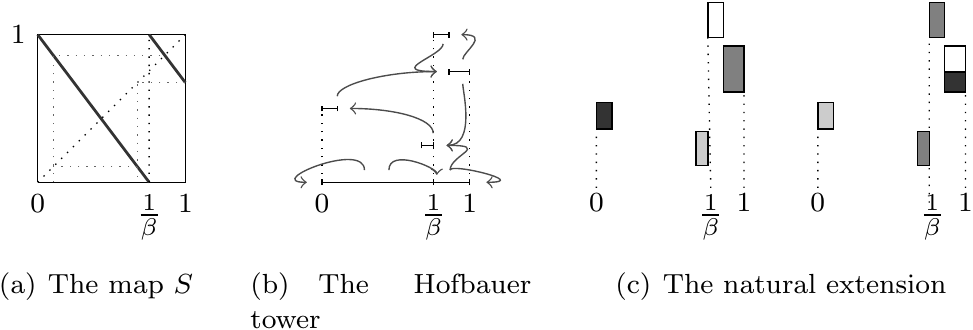}
\caption{The negative $\beta$-transformation with $\beta$ equal to the real root of $x^3-x-1$ and its natural extension.}
\label{f:negbeta}
\end{figure}

Note that in general $\mu_2$ is not necessarily equivalent to Lebesgue on the unit interval, see Figure~\ref{f:negbeta} for a specific example. In \cite{LS11} Liao and Steiner show that $S$ is exact, hence by results from Rohlin \cite{Roh64}, the natural extension is a $K$-automorphism. Now the previously mentioned result from Rychlik \cite{Ryc83} gives that the natural extension of $S$ is weakly Bernoulli and thus so is $S$ itself.

\vskip .2cm
Now suppose that $1$ has an eventually periodic orbit for both $T$ and $S$. This happens for example when $\beta$ is a {\em Pisot number}, \ie a real-valued algebraic integer larger than $1$ with all its Galois conjugates in modulus less than $1$ (see \cite{Sch80} for $T$ and \cite{FL09} for $S$). Then the natural extensions of both maps contain only finitely many rugs and the Markov shifts constructed in Section~\ref{ss:natural_extension} have finite alphabets. Let $(Y_T, \mathcal C_T, \nu_T, F_T)$ and $(Y_S, \mathcal C_S, \nu_S, F_S)$ denote the natural extensions of $T$ and $S$. Since both $T$ and $S$ have the same entropy and are exact and thus strongly mixing, also $F_T$ and $F_S$ have the same entropy and are strongly mixing. The results from \cite{KS79} by Keane and Smorodinsky show that then the natural extensions $T$ and $S$ are {\em finitarily isomorphic}, \ie there is an a.e.~continuous isomorphism from $Y_T$ to $Y_S$. 

\section{Further examples}\label{sec:examples}

\subsection{Higher integer dimensions}
Let $T:[0,1]^d \to [0,1]^d$ be a piecewise affine expanding map of the form $T x = A x$ (mod $\Z^d$). Let $Z_1, \ldots, Z_N$ be the pieces on which $T$ is continuous. Since $T$ is expanding in all directions, we get (c1) and (c3). The ergodicity from (c5) is clear for Lebesgure measure $m^d$ and since $T$ is piecewise affine, (c2) also holds. For each $1 \le j \le N$ and each measurable set $E \subseteq Z_j$ we have $m^d (TE) = |\text{det}(A)| \ m^d(E)$, which gives (c4). Condition (c6) follows from the fact that $m^d\big( \cup_{j=1}^N \partial Z_j \big)=0$ combined with (c2) and (c4). Hence, $m^d$ satisfies conditions (c1)-(c6) and is not $T$-invariant. Then by Proposition~\ref{prop:noninvariant}, $m^d$ is liftable and the construction from Section~\ref{ss:natural_extension} gives the natural extension.

\subsubsection{Random $\beta$-transformation}\label{ss:randombeta}
One specific example of a piecewise affine conformal map we give here is a variation of the random $\beta$-transformation, which was first introduced in \cite{DK03}. If $1< \beta <2$, then almost every point has infinitely many different number expansions of the form $\sum_{k=1}^{\infty} \frac{b_k}{\beta^k}$, where $b_k \in \{0,1\}$. The random $\beta$-transformation gives for each point all possible such expansions in base $\beta$ and is basically defined as the product of an independent coin tossing process and two isomorphic copies of the map $x \mapsto \beta x \pmod 1$ on an extended interval. Consider the space $X=[0,1] \times \big[ 0, \frac{1}{\beta-1} \big]$, with the partition $\mathcal Z = \{ Z_j \}_{j=1}^6$ given by
\[\begin{array}{ccc}
Z_1 = \big[ 0,\frac12 \big) \times \big[ 0,\frac{1}{\beta}\big), & Z_2 = \big[ 0,\frac{1}{2}\big) \times \big[ \frac{1}{\beta}, \frac{1}{\beta(\beta-1)}\big], & Z_3 = \big[0,\frac12\big) \times \big( \frac{1}{\beta (\beta-1)}, \frac{1}{\beta-1} \big].\\
\\
Z_4 = \big[ \frac12,1\big] \times \big[ 0,\frac{1}{\beta}\big), & Z_5 = \big[ \frac{1}{2},1\big] \times \big[ \frac{1}{\beta}, \frac{1}{\beta(\beta-1)}\big], & Z_6 =\big[\frac12,1\big] \times \big( \frac{1}{\beta (\beta-1)}, \frac{1}{\beta-1} \big].
\end{array}\]
The transformation $T:X \to X$ is defined by
\[ T(x,y) = \left\{
\begin{array}{ll}
(2x \, (\text{mod }1), \beta y), & \text{if } (x,y) \in Z_1 \cup Z_2 \cup Z_4,\\
\\
(2x \, (\text{mod }1), \beta y -1), & \text{if } (x,y) \in Z_3 \cup Z_5 \cup Z_6.
\end{array}
\right.\]
The reasons why (c1)-(c6) hold for $m^d$ are the same as in the previous example. Proposition~\ref{prop:noninvariant} gives that $m^d$ is liftable to a measure $\hat \mu$ on the Hofbauer tower. Hence, we can construct the natural extension of $T$ as outlined in Section~\ref{ss:natural_extension}.

Originally the random $\beta$-transformation is not defined as a proper skew product, see \cite{DK03}. Instead of always applying the doubling map in the second coordinate, they only apply the doubling map in the middle region. Below we give a specific example of the random $\beta$-transformation defined in this way and construct the natural extension for this value of $\beta$. Let $\beta = \frac{1+\sqrt 5}{2}$ be the golden ratio and define the map $K:[0,1] \times  [0, \beta]  \to [0,1] \times  [0, \beta]$ by
\[ K(x,y) = \left\{
\begin{array}{ll}
(y, \beta x), & \text{if }  x < 1/\beta,\\
(2y, \beta x), & \text{if } (x,y) \in [1/\beta,1] \times [0, 1/2),\\
(2y-1, \beta x -1), & \text{if } (x,y) \in [1/\beta,1] \times [1/2,1],\\
(y, \beta x -1), & \text{if } x>1.
\end{array} \right.\] 
Then
\[ \begin{array}{lll}
Z_1 = [0,1] \times [0, 1/\beta] ,  && Z_2 = [0,1/2] \times [1/\beta,1],\\
Z_3 = [1/2,1] \times [1/\beta,1], && Z_4 = [0,1] \times [1,\beta].
\end{array}\]
Since this already is a Markov partition, $\mathcal D = \{Z_1, Z_2, Z_3, Z_4\}$. Using the formula from (\ref{q:pwdensity}) we get that for each $n \ge 2$,
\[ \rho_n(Z_1) = \frac1n \Big(1 +  \sum_{k=2}^n \frac{f_{k+2}}{2\beta^k} \Big), \]
where $f_j$ is the $j$-th element in the Fibonacci sequence starting with $f_1=1$, $f_2=1$. Using the direct formula for the elements in the Fibonacci sequence gives
\[ \rho(Z_1)= \lim_{n \to \infty} \rho_n(Z_1) = \frac{\beta^3}{2(\beta^2+1)}.\]
By symmetry we get the same value for $Z_4$. Since $\rho(Z_2) = \frac{\rho(Z_1)}{\beta} + \frac{\rho(Z_4)}{\beta}$, we have $\rho(Z_2)=\rho(Z_3) = \frac{\beta^2}{\beta^2+1}$. In Figure~\ref{f:random} we see the natural extension for this random $\beta$-transformation $K$.
\begin{figure}[ht]
\centering
\includegraphics{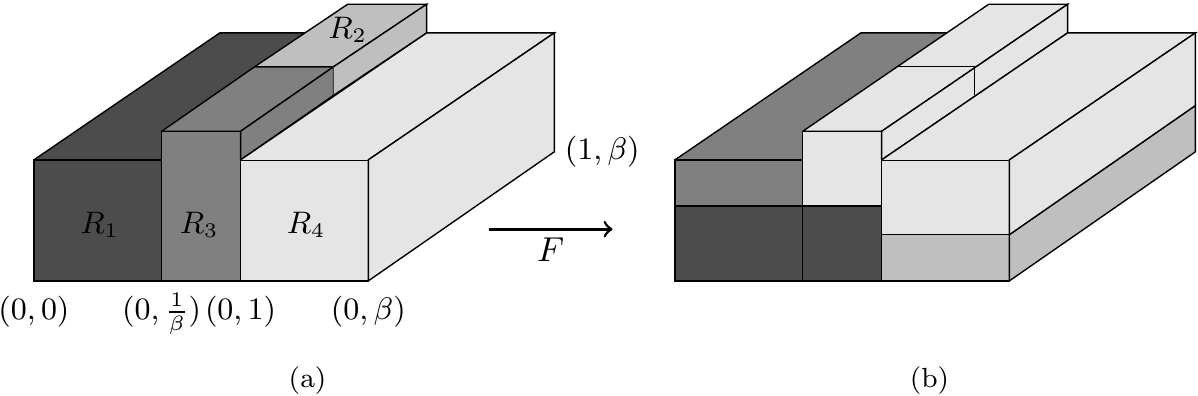}
\caption{The natural extension for the random $\beta$-transformation with $\beta$ equal to the golden ratio. $F$ maps the regions in (a) to the regions in (b) with the same colour.}
\label{f:random}
\end{figure}

\subsection{Balanced measures}\label{ss:balanced}
We say that $T:X \to X$ is $d$-to-$1$ if there is a partition $\{ Z_j \}_{j=1}^d$ of $X$, generating the $\sigma$-algebra of measurable sets, such that $T:Z_j \to X$ is a measurable bijection and $Z_i \cap Z_j$ is negligible (e.g.~countable or of measure zero w.r.t.\ the measure used). A measure $\mu$ is {\em balanced} if the Jacobian $J(x) \equiv d$. In this case, $(\mathcal D, \to)$ is the full graph on $\{ Z_1, \ldots, Z_d\}$, so conditions (c4) and (c6) are trivially satisfied. Therefore we can construct the natural extension by the method of Section~\ref{ss:natural_extension} if the system satisfies (c1), (c2) and (c3) and if the measure is ergodic. We give two examples.

\subsubsection{Rational maps on the Julia set}\label{sss:rationalmaps}
Let $R:\hat \C \to \hat \C$ be a rational map of degree $d \geq 2$ on the Riemann sphere, \ie $R(z) = \frac{P(z)}{Q(z)}$ where $P$ and $Q$ are two polynomials with no common factor and $d = \max\{ \deg P, \deg Q\}$. When restricted to the Julia set, one can find a generating partition $\{ Z_j\}$ w.r.t.~which $R$ is $d$-to-$1$ giving (c1) and (c3). This goes back to Ma\~n\'e \cite{Man83} and the corresponding balanced measure is well-defined (\ie independent of the choice of $\{ Z_j\}$) as well as the unique invariant measure of maximal entropy. This gives (c2). Following conjectures and partial results by Ma\~n\'e \cite{Man85} and Lyubich \cite{Lyu83}, and using techniques of Hoffman and Rudolph \cite{HR02} it was shown that $\mu$ is isomorphic to the $(1/d, \dots, 1/d)$ one-sided Bernoulli shift, see \cite{HH02}, and hence $\mu$ is ergodic.
Explicit construction for a Bernoulli partitions (for Latt\`es examples)
can be found in \cite{BK00,Kos02}.

\subsubsection{Certain endomorphisms on the torus}\label{sss:torus}
Let $X = \mathbb T^n = \R^n / \Z^n$ be the $n$-dimensional torus, and $T$ an endomorphism of the form $T(x) = h(Ax) \pmod{\Z^n}$, where $h:\mathbb T^n \to \mathbb T^n$ is a homeomorphism homotopic to the identity, and $A$ an $n \times n$ integer matrix with $\det(A) = \pm d$. If $h$ is the identity, 
then Lebesgue measure is a balanced measure, see \cite{DH93} for
some intricancies of its natural extensions and factor spaces. 
A priori, a $d$-to-$1$ partition $\{ Z_j \}_{j=1}^d$ need not be unique; more importantly, it is not automatically generating. For example, if
\[ A = \begin{pmatrix} 6 & 4 \\ 2 & 2 \end{pmatrix} =
\begin{pmatrix} 3 & 2 \\ 1 & 1 \end{pmatrix} \cdot
\begin{pmatrix} 2 & 0 \\ 0 & 2 \end{pmatrix} \]
with eigenvalues $\lambda_\pm = 4 \pm 2\sqrt{3}$, then the eigenspace of the second eigenvalue represents a contracting direction, and for this reason the partition of $\mathbb T^2$ in, say, four quarters, is not generating in forward time. In fact, there exists no forward time generating $4$-to-$1$ partition because the topological entropy is $\log(4 + 2\sqrt{3}) > \log 4$. See Kowalski \cite{Kow88} for some interesting results in this direction.

\noindent
Henk Bruin\\
Fakult\"at f\"ur Mathematik \\ 
Universit\"at Wien \\ 
Nordbergstra{\ss}e 15/Oskar Morgensternplatz 1, A-1090 Wien\\
Austria\\
henk.bruin@univie.ac.at
\\[5mm]
Charlene Kalle\\
Mathematisch Instituut\\ 
Leiden University\\ 
Niels Bohrweg 1, 2333CA Leiden\\ 
The Netherlands\\
kallecccj@math.leidenuniv.nl

\end{document}
